\def\UrlSpecials{\do\~{\kern -.15em\lower .7ex\hbox{~}\kern .04em}} \catcode`~=13 
\newcommand{\tnorm}[1]{{\left\vert\kern-0.25ex\left\vert\kern-0.25ex\left\vert #1 
    \right\vert\kern-0.25ex\right\vert\kern-0.25ex\right\vert}}
\newcommand{\tnormt}[1]{{\vert\kern-0.25ex\vert\kern-0.25ex\vert #1 
    \vert\kern-0.25ex\vert\kern-0.25ex\vert}}
\newcommand{\abst}[1]{\vert#1\vert}
\newcommand{\nn}{\nonumber}
\newcommand{\numberthis}{\addtocounter{equation}{1}\tag{\theequation}}
\newcommand{\dom}{\mathsf{dom}\,}
\newcommand{\ri}{\mathsf{ri}\,}
\newcommand{\calF}{\mathcal{F}}
\newcommand{\calI}{\mathcal{I}}
\newcommand{\calK}{\mathcal{K}}
\newcommand{\calR}{\mathcal{R}}
\newcommand{\calW}{\mathcal{W}}
\newcommand{\calX}{\mathcal{X}}
\newcommand{\rmA}{\mathrm{A}}
\newcommand{\rmD}{\mathrm{D}}
\newcommand{\bbE}{\mathbb{E}}
\newcommand{\bbR}{\mathbb{R}}
\newcommand{\bbS}{\mathbb{S}}
\newcommand{\bbV}{\mathbb{V}}
\DeclareMathAlphabet{\mathbsf}{OT1}{cmss}{bx}{n}
\newcommand{\rvF}{\mathsf{F}}
\newcommand{\rvT}{\mathsf{T}}
\newcommand{\tilH}{\widetilde{H}}
\newcommand{\tilM}{\widetilde{M}}
\newcommand{\tilw}{\widetilde{w}}
\newcommand{\barw}{\bar{w}}
\newcommand{\ipt}[2]{\langle{#1},{#2}\rangle}
\DeclareMathOperator*{\argmax}{arg\,max}
\DeclareMathOperator{\st}{s.t.}
\DeclareMathOperator{\tr}{tr}
\newtheorem{theorem}{Theorem} 
\newtheorem*{theorem*}{Theorem}
\newtheorem{lemma}{Lemma}
\newtheorem{prop}{Proposition}
\newtheorem{corollary}{Corollary}
\newtheorem*{assump*}{Assumption}
\theoremstyle{definition}
\newtheorem{example}{Example} 
\theoremstyle{remark}
\newtheorem{remark}{Remark}
\newcommand{\qednew}{\nobreak \ifvmode \relax \else
      \ifdim\lastskip<1.5em \hskip-\lastskip
      \hskip1.5em plus0em minus0.5em \fi \nobreak
      \vrule height0.75em width0.5em depth0.25em\fi}
\title{
An Optimization Perspective on the Monotonicity of the Multiplicative Algorithm for Optimal Experimental Design}
\author{Renbo Zhao
}
\begin{document}

\maketitle

\begin{abstract}
We provide an optimization-based argument for the monotonicity of the multiplicative algorithm (MA) for a class of  optimal experimental design problems considered in Yu~\cite{Yu_10}. Our proof avoids introducing auxiliary variables (or problems) and leveraging statistical arguments,   and is much more straightforward and simpler compared to the proof in~\cite[Section~3]{Yu_10}. The simplicity of our monotonicity proof also allows us to easily identify several sufficient conditions  that ensure the {strict monotonicity} of MA. In addition, we provide two simple and similar-looking examples on which MA behaves very differently. 
These examples offer insight in the behaviors of MA, and also reveal some limitations of MA when applied to certain optimality criteria. We discuss these limitations, and  pose open problems that may lead to deeper understanding of the behaviors of MA  on these optimality criteria. 
\end{abstract}

\section{Introduction} \label{sec:intro}

Optimal experimental design (OED) is an interesting and important field that lies at the intersection of statistics and optimization, and has a long history of development (see e.g., Fedorov~\cite{Fedorov_72}, Silvey~\cite{Silvey_80}, Pukelsheim~\cite{Pukel_06}). Depending on the purpose of the experimenter,  there are many possible formulations of the OED problem, which can lead to either continuous or discrete optimization problems. In this work, we are interested in the following (finite-dimensional) continuous formulation of the OED problem. Suppose that we are interested in estimating some (deterministic) 
parameter $\theta\in\bbR^d$ through a sequence of experiments. In each experiment, given a design point $x\in\bbR^q$,  the (conditional) probability density function (PDF) of the 
response $Y$ is modeled as $p_{Y|X}(y|x;\theta)$, 
which is  parameterized by $\theta$ and is assumed to be known. 
   We focus on a finite design space $\calX := \{x_1,\ldots,x_n\}$, and seek a design measure $w\in \Delta_n:=\{w\ge 0: \sum_{i=1}^n w_i = 1\}$, which is a distribution on $\calX$, such that certain real-valued function (known as the optimality criterion) of its {\em moment matrix} $M_\theta(w)$ is maximized, where 
\begin{equation}
M_\theta(w):= \bbE_{X\sim w}[I_{Y|X}(\theta)] := \textstyle\sum_{i=1}^n w_i\; I_{Y|X=x_i}(\theta),
\end{equation}
and $I_{Y|X=x_i}(\theta)$ denotes the Fisher information matrix about $\theta$ with respect to the conditional 
PDF $p_{Y|X}(y|x_i;\theta)$, namely
\begin{equation}
I_{Y|X=x_i}(\theta):= \bbE\left[s_{Y|X=x_i}(\theta) s_{Y|X=x_i}(\theta)^\top \right], \quad s_{Y|X=x_i}(\theta) := \frac{\partial}{\partial \theta} \ln p_{Y|X}(y|x_i;\theta),\quad \forall\,i\in[n].  
\end{equation}
(Here $[n]:=\{1,\ldots,n\}$.)
Intuitively, $M_\theta(w)$ measures the the amount of information about $\theta$ contained in the response $Y$, averaged over the distribution of (random) design point $X$. 
For notational convenience,  define $A_i(\theta):= I_{Y|X=x_i}(\theta)$ for $i\in[n]$. 

Note that in general,  $A_i(\theta)$ may depend on the (unknown) parameter $\theta$ (for $i\in[n]$),  which poses certain difficulties in formulating the OED problem. 
To resolve this issue,  one common approach  in the literature  
is to substitute an a priori estimate of $\theta$, denoted by $\theta_0$, into the definition of $A_i(\theta)$, and  $\theta_0$ can be obtained directly from domain knowledge or estimated from a pilot sample. This results in the so-called ``locally optimal design'' (see e.g.,~\cite{Chernoff_53,Yang_12}). 
Of course, 
such an approach ignores the uncertainty of $\theta$, but it also has clear advantages --- 
it leads to a relatively simple OED formulation, and also works well when the dependence of $A_i(\theta)$ on $\theta$ is ``weak''. In this work, we shall adopt this approach, and hence suppress the dependence of $A_i(\theta)$ and $M_\theta(w)$ on $\theta$. As a result, we introduce simpler notations, namely $A_i:= A_i(\theta)$ for $i\in[n]$ and $M(w):=M_\theta(w)$. 

Let us now introduce the optimization problem associated with OED. We first introduce some standard notations. Let $\bbS^d$, $\bbS_{+}^d$ and $\bbS_{++}^d$ denote the sets of $d\times d$ symmetric, symmetric and positive semi-definite, and symmetric and positive definite matrices, respectively. For $A,B\in\bbS^d$, we write $A\succeq B$ if $A - B \in \bbS_{+}^d$ and $A\succ B$ if $A - B \in \bbS_{++}^d$. From the definition of $A_i$ above, it is clear that $A_i\succeq 0$ for all $i\in[n]$. In addition, we shall assume that $A_i\ne 0$ for $i\in[n]$ and $\sum_{i=1}^n A_i \succ 0$. 
Given an optimality criterion  $\phi:\bbS_{++}^d \to \bbR$, 
the optimization problem reads: 
\begin{equation}
\textstyle \sup_{w\in\Delta_n }\; \phi(M(w)), \quad {\rm where}\quad  M(w):= \sum_{i=1}^n w_i A_i. \tag{${\rm OED}_0$} \label{eq:OED0} 
\end{equation}
%
In the literature, $\phi$ 
is typically assumed to be concave and isotonic on $\bbS_{++}^d$ (cf.~\cite[Chapter~5]{Pukel_06}), and hence~\eqref{eq:OED} is a convex optimization problem. 
Note that we call $\phi$  isotonic on $\bbS_{++}^d$ if 
\begin{equation}
0\prec A \preceq B \quad\Longrightarrow\quad  \phi(A) \le \phi(B).  \label{eq:isotone}
\end{equation} 
For the purpose of this work, we shall also assume $\phi$ to be differentiable on $\bbS_{++}^d$. 
Typical examples of $\phi$ include
\begin{itemize}
\item the D-criterion: $\phi_\rmD(M) := \ln\det(M)$ for $M\succ 0$,
\item the A-criterion: $\phi_\rmA(M) := -\tr(M^{-1})$ for $M\succ 0$, and more generally,
\item the $p^{\rm th}$-mean-criterion: $\phi_p(M) := -\tr(M^{-p})$ for  $p >0$ and $M\succ 0$.
\end{itemize}
Now, note that for some $w\in\Delta_n$, we may have $M(w)\not \in \bbS_{++}^d$, 
and the value of $\phi$ is undefined at $M(w)$.  Therefore, to make~\eqref{eq:OED0} well-posed, we extend the definition of $\phi$ to $\bbS^n$ by %
defining a new function 
$\Phi:\bbS^n\to \underline \bbR:=\bbR\cup\{-\infty\}$ such that 
\begin{equation}
   \Phi(M)  := \bigg\{ 
   \begin{aligned}
   \phi(M), &\quad   M\succ 0  \\
   -\infty, &\quad    \mbox{otherwise}
\end{aligned}\;\;. 
\end{equation}
(Note that $\Phi$ is a function of $\phi$.)  We then solve the following problem:
\begin{equation}
f^*:= \textstyle \sup_{w\in\Delta_n }\; \{f(w):=\Phi(M(w))\}.  \tag{${\rm OED}$} \label{eq:OED} 
\end{equation}
The formulation in~\eqref{eq:OED} restricts the feasible moment matrix $M(w)$ to be positive definite, and due to this, 
the feasible region of~\eqref{eq:OED} is given by 
\begin{equation}
\Delta_n^+:=\{w\in \Delta_n: M(w)\succ 0\}. 
\end{equation}
(In Section~\ref{sec:discuss}, we will introduce a ``generalized'' formulation of~\eqref{eq:OED}. However, 
in this work, we shall stick to~\eqref{eq:OED}  since it is more convenient for 
us to 
develop the theory of the multiplicative algorithm, which will be introduced shortly.)  

Developing numerical algorithms to solve
~\eqref{eq:OED} has attracted much research 
efforts in the past fifty years, from both the statistics and the optimization community. As a result, several effective algorithms  have been developed --- for a non-exhaustive list of works, see~\cite{Fedorov_72,Wynn_72,Atwood_73,Fellman_74,Titter_76,Silvey_78,Torsney_83,Bohn_86,Kachiyan_96,Pazman_06,Todd_07,
Dette_08,Ahi_08,Harman_09,Torsney_09,Yu_10,Yu_11,Lu_13,Ahi_15,Lu_18,Cohen_19,Zhao_22mg,Zhao_23,Zhao_25}. 
Note that the majority of 
these works 
solely tackle the D-optimal design problem, namely $\phi:= \phi_\rmD$ in~\eqref{eq:OED}, while other works consider a more general setting, where $\phi$ belongs to  a class of optimality criteria in~\eqref{eq:OED} (see e.g.,~\cite{Silvey_78,Yu_10,Lu_13}). 
In fact, among all of the algorithms proposed, the {\em multiplicative algorithm} (MA), first introduced in~\cite{Silvey_78}, is one of the most widely adopted algorithms for solving~\eqref{eq:OED}, and have received extensive research efforts (see e.g.,~\cite{Fellman_74,Titter_76,Silvey_78,Torsney_83,Pazman_06,
Dette_08,Harman_09,Torsney_09,Yu_10,Cohen_19,Zhao_22mg}). This algorithm has an extremely simple form, which is presented in Algorithm~\ref{algo:MA}. (In the following, we shall use MA and Algorithm~\ref{algo:MA} interchangeably.)  
The popularity of 
MA is due to at least three reasons. First,  it incurs  low computations per iteration. In fact, the only non-trivial computation involves computing the gradient $\nabla f(w^k)$, which stands in contrast to the Newton-type methods (e.g.,~\cite{Lu_13}) that involve computing and manipulating the Hessian of $f$. Second, the  implementation of MA is extremely simple, and involves minimal choices of parameters. Indeed, one only needs to choose the power parameter $\lambda\in(0,1]$ 
before the algorithm starts, and no parameters needs to be computed subsequently. 
This 
is clearly an advantage over the Frank-Wolfe-type methods~
\cite{Fedorov_72,Wynn_72,Atwood_73,Bohn_86,Kachiyan_96,Todd_07,Ahi_08,Zhao_23,Zhao_25}, where each iteration involves judicious computation of the step-size (either in closed-form or via line-search). Third,  MA has wide applicability and theoretical soundness. In fact, as shown in Yu~\cite{Yu_10}, 
the sequence of objective values $\{f(w^k)\}_{k\ge 0}$ generated by MA 
monotonically 
converges to $f^*$ under a variety of optimality criteria (which include  
all the criteria mentioned above). 
Moreover, when $\phi= \phi_\rmD$, our recent work~\cite{Zhao_22mg} showed that MA enjoys an ergodic $O(1/k)$ convergence rate in terms of the objective value, i.e., $f^* - f(\barw^k) = O(1/k)$, where $\barw^k: = (1/k) \sum_{i=0}^{k-1} w^i$ for $k\ge 1$ (see~\cite{Cohen_19} for the  ergodic $O(1/k)$ convergence rate in  terms of another criterion $\max_{i=1}^n \ln(\nabla_i f(\barw^k))$). 
Before concluding our brief review on the literature, it is worth mentioning that~\eqref{eq:OED} is a challenging problem from the viewpoint of first-order methods. Indeed,  for almost all the optimality criterion $\phi$ that we are interested in (which include all the criteria mentioned above), the objective function $f$ does not have Lipschitz (or H\"olderian) function value or gradient on the feasible region $\Delta_n^+$.

\begin{algorithm}[t!]
\caption{Multiplicative Algorithm for Solving~\eqref{eq:OED}}\label{algo:MA}
\begin{algorithmic}
\State {\bf Input}: Power parameter $\lambda\in(0,1]$ and starting point $w^0 \in \ri \Delta_n:= \{w> 0: \sum_{i=1}^n w_i = 1\}$ 
\State {\bf At iteration $k\ge 0$}:
\begin{enumerate}[leftmargin = 6ex]
\item Compute $\nabla f(w^k)$, namely the gradient of $f$ at $w^k$. 
\item \label{step:multiply} Compute $\barw^k := w^k\circ \nabla f(w^k)^\lambda$, where $\circ$ denotes the entrywise product and $(\cdot)^\lambda$ is applied entrywise to $\nabla f(w^k)$.
\item $w^{k+1}:= \barw^k/\sum_{i=1}^n \barw_i^k$. 
\end{enumerate}
\end{algorithmic}
\end{algorithm}

\vspace{1ex}

\noindent
In this work, we shall focus on the monotonicity of the sequence of objective values $\{f(w^k)\}_{k\ge 0}$ generated by MA. Such a monotonicity property is desirable  arguably for any optimization algorithm, and often plays an important role in analyzing the asymptotic convergence and/or convergence rate of the 
algorithm. In fact, this property is the ``driving force'' in proving the asymptotic convergence of MA for solving~\eqref{eq:OED} in~\cite{Yu_10}. 
While monotonicity is easy to see for the ``classical'' gradient-type algorithms (under the Lipschitz-gradient condition on $f$ and proper choice of step-sizes), it is much harder to establish for MA on~\eqref{eq:OED}. That said, in the seminal work~\cite{Yu_10}, Yu showed the monotonicity of MA for  a class of optimality criteria $\phi$. 
To describe this class of optimality criteria, Yu defined a new function $\psi:\bbS_{++}^d \to \bbR$ based on $\phi$, namely 
\begin{equation}
\psi(M) := -\phi(M^{-1}), \quad \forall\,M\succ 0, \label{eq:def_psi}
\end{equation}
and he placed the following assumptions on $\psi$: 
\begin{enumerate}[label = (A\arabic*)]
\item \label{assump:diff} $\psi$ is differentiable on $\bbS_{++}^d$.
\item \label{assump:iso} $\psi$ is isotonic on $\bbS_{++}^d$ (cf.~\eqref{eq:isotone}), which amounts to $\nabla \psi(M)\succeq 0$ for all $M\in \bbS_{++}^d$ under~\ref{assump:diff}. 
\item \label{assump:concave} $\psi$ is concave on $\bbS_{++}^d$: for any $X,Y\in \bbS_{++}^d$, we have $\psi(Y) \le \psi(X) + \ipt{\nabla \psi(X)}{Y-X}$ . 
\end{enumerate}
Note that these assumptions hold for the D-, A- and the $p^{\rm th}$-mean-criteria with $p\in(0,1)$. In proving the monotonicity of MA,  Yu made use of statistical arguments that  were 
inspired from the EM algorithm~\cite{Dempster_77}. Specifically, he introduced several  
auxiliary problems that are defined on the augmented variable space, and reduce to the original problem~\eqref{eq:OED} upon partial minimization. He then showed that MA can be regarded as an algorithm that improves the objective value of one of the auxiliary problems, and hence improves the objective value of~\eqref{eq:OED}. Although these arguments bear certain statistical intuitions, they do require introducing several auxiliary variables and transferring between different auxiliary problems, and hence are somewhat convoluted. In addition, the arguments leveraged some results in statistical estimation theory that the optimization audience may not be familiar with.  

The main contribution of this work is to provide an optimization-based argument for the monotonicity of MA under the same assumptions made in Yu~\cite{Yu_10}, namely~\ref{assump:diff} to~\ref{assump:concave}. As we shall see,  our proof does not need to introduce any auxiliary variables or problems, or leverage any statistical arguments.  In fact, it is much more straightforward and simpler compared to the proof in~\cite[Section~3]{Yu_10}. The crux of our proof is to make use of the 
{\em matrix Cauchy-Schwartz} (MCS) inequality~\cite{Lav_08}, which is simple to prove but less-known. Indeed,  the finite-sum structure of $M(w)$, together with the functional form of $\psi$ in~\eqref{eq:def_psi}, makes the MSC inequality particularly suitable 
for analyzing MA on~\eqref{eq:OED}. The simplicity of our monotonicity proof also allows us to easily identify several conditions on $\psi$ and $\lambda$ that ensure the {\em strict monotonicity} of MA, which plays an important role in the convergence analysis of MA on~\eqref{eq:OED} (cf.~\cite[Theorem~3]{Yu_10}). In addition, we provide two simple and similar-looking examples on which MA behaves very differently. 
These examples not only demonstrate the advantages of choosing $\lambda\in(0,1)$ as opposed to $\lambda=1$ in terms of ensuring the convergence of MA, but also reveal some limitations of MA when applied to~\eqref{eq:OED}  with certain optimality criteria $\phi$ (e.g., the c-criterion). 
We conclude this paper by discussing these limitations,
and  pose open problems that may lead to deeper understanding of the behaviors of MA  on these optimality criteria. 

\section{Proof of the Monotonicity of MA}

Before proving the monotonicity of MA, let us first digress a bit and examine 
 the transformation 
$\rvT:\phi\mapsto \psi$, where $\psi$ is given in~\eqref{eq:def_psi}. 
As mentioned in Section~\ref{sec:intro}, this transformation plays  an important role in~\cite{Yu_10}  for identifying the class of optimality criteria $\phi$ on which MA is monotonic. 
Indeed, as we shall see below,  
$\rvT$ has some nice properties that may be of independent interest. 
 To that end, 
 let $\bbV$ be  the vector space consisting of all the functions $\phi: \bbS_{++}^d\to \bbR$ that are {\em differentiable} on $\bbS_{++}^d$, and define the convex cone 
\begin{equation}
\calK:= \{\phi\in\bbV:\; \phi \;\,\mbox{is isotonic on}\;\, \bbS_{++}^d\}. 
\end{equation}

\begin{lemma} \label{lem:T}
For any $\phi\in\bbV$, define the function $\rvT (\phi):\bbS_{++}^d\to \bbR$ such that  
\begin{equation}
(\rvT (\phi))(M) := -\phi(M^{-1}), \quad \forall\,M\succ 0. \label{eq:def_T}
\end{equation}
The transformation $\rvT$ 
is a linear automorphism on $\bbV$ with $\rvT^{-1} = \rvT$. In addition, its restriction on $\calK$, denoted by by $\rvT_\calK$, is an automorphism on $\calK$ with $\rvT_\calK^{-1} = \rvT_\calK$. 
\end{lemma}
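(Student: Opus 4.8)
The plan is to verify the three defining properties — linearity, the involution identity $\rvT\circ\rvT=\mathrm{id}$, and cone-invariance — directly from the formula \eqref{eq:def_T}, since each reduces to a short computation once the right elementary matrix fact is isolated. First I would check that $\rvT$ genuinely maps $\bbV$ into $\bbV$: the inversion map $\iota:\bbS_{++}^d\to\bbS_{++}^d$, $M\mapsto M^{-1}$, is a smooth bijection of the open cone $\bbS_{++}^d$ onto itself (it is its own inverse), so $\rvT(\phi)=-\phi\circ\iota$ is differentiable whenever $\phi$ is, and hence lies in $\bbV$. Linearity is then immediate, since $\phi\mapsto-\phi\circ\iota$ is linear in $\phi$: for scalars $a,b$ and $\phi_1,\phi_2\in\bbV$ one has $(\rvT(a\phi_1+b\phi_2))(M)=-(a\phi_1+b\phi_2)(M^{-1})=a(\rvT(\phi_1))(M)+b(\rvT(\phi_2))(M)$.

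Next I would establish the involution identity, which is the heart of the matter. For any $\phi\in\bbV$ and any $M\succ0$,
\[
(\rvT(\rvT(\phi)))(M)=-(\rvT(\phi))(M^{-1})=-\bigl(-\phi\bigl((M^{-1})^{-1}\bigr)\bigr)=\phi(M),
\]
so $\rvT\circ\rvT$ is the identity on $\bbV$. This single identity does all the remaining work: it shows $\rvT$ is a bijection with $\rvT^{-1}=\rvT$, and together with linearity it makes $\rvT$ a linear automorphism of $\bbV$.

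For the restriction to $\calK$, the only substantive point is that $\rvT$ preserves isotonicity. Suppose $\phi\in\calK$ and $0\prec A\preceq B$. The antitonicity of matrix inversion on $\bbS_{++}^d$ gives $0\prec B^{-1}\preceq A^{-1}$; applying the isotonicity of $\phi$ to this pair yields $\phi(B^{-1})\le\phi(A^{-1})$, i.e.\ $-\phi(A^{-1})\le-\phi(B^{-1})$, which is exactly $(\rvT(\phi))(A)\le(\rvT(\phi))(B)$. Hence $\rvT(\phi)\in\calK$, so $\rvT(\calK)\subseteq\calK$. Combining this with the involution identity gives $\calK=\rvT^2(\calK)\subseteq\rvT(\calK)\subseteq\calK$, whence $\rvT(\calK)=\calK$; thus $\rvT_\calK$ is a bijection of $\calK$ onto itself with $\rvT_\calK^{-1}=\rvT_\calK$, and since it is the restriction of a linear map it respects the convex-cone structure, making it an automorphism of $\calK$.

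The main obstacle is essentially isolating the one nontrivial ingredient — the operator antitonicity $0\prec A\preceq B\Rightarrow B^{-1}\preceq A^{-1}$ (a form of the L\"owner theory); everything else is formal manipulation of \eqref{eq:def_T}. I would invoke this as a standard fact, or supply the short proof: the congruence $X\mapsto B^{-1/2}XB^{-1/2}$ turns $A\preceq B$ into $B^{-1/2}AB^{-1/2}\preceq I$, whose eigenvalues therefore lie in $(0,1]$; inverting reverses them to give $B^{1/2}A^{-1}B^{1/2}\succeq I$, and a second congruence by $B^{-1/2}$ yields $A^{-1}\succeq B^{-1}$.
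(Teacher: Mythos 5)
Your proof is correct and follows essentially the same route as the paper's: verify $\rvT(\bbV)\subseteq\bbV$ and linearity, use the involution identity $\rvT\circ\rvT=\mathrm{id}$ to get bijectivity and $\rvT^{-1}=\rvT$, and use the antitonicity of matrix inversion on $\bbS_{++}^d$ to show $\rvT(\calK)\subseteq\calK$ before applying the involution again for surjectivity onto $\calK$. The only cosmetic difference is that you derive injectivity directly from the involution while the paper checks it separately, and you additionally supply a proof of the antitonicity fact that the paper merely cites; neither changes the substance.
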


\begin{proof}
From~\eqref{eq:def_T}, it is clear that $\rvT$ is a linear operator on $\bbV$. 
For any $\phi\in\bbV$, define $\psi:= \rvT (\phi)$. From~\eqref{eq:def_T}, it is clear that $\psi$ is differentiable on $\bbS_{++}^d$ and hence $\psi\in\bbV$. As a result, $\rvT(\bbV)\subseteq \bbV$. Now, if $\rvT(\phi_1)=\rvT(\phi_2)$ for some $\phi_1, \phi_2\in\bbV$, then $\phi_1(M^{-1}) = \phi_2(M^{-1})$ for all $M\succ 0$, which amounts to $\phi_1(M) = \phi_2(M)$ for all $M\succ 0$, and hence $\phi_1 = \phi_2$. This shows that $\rvT$ is one-to-one. Also, since 
\begin{equation}
\rvT(\rvT(\phi)) = \phi, \quad \forall\, \phi\in\bbV, \label{eq:T2}
\end{equation}
we know that 
$\rvT^{-1} = \rvT$ and $\rvT$ is onto. This shows that $\rvT$ is a linear automorphism on $\bbV$. 
Now, denote restriction of $\rvT$ on $\calK$ by $\rvT_\calK$.  For any $\phi\in\calK$, since the mapping $M\to M^{-1}$ is antitonic on $\bbS_{++}^d$ (namely if $0\prec A \preceq B$, 
then $0\prec B^{-1} \preceq A^{-1}$), we know that $\psi\in\calK$, 
and hence $\rvT_\calK(\calK)\subseteq \calK$. Since $\rvT$ is one-to-one on $\bbV$, it is clear that  $\rvT_\calK$ is one-to-one on $\calK$. Finally, by~\eqref{eq:T2} and $\rvT_\calK(\calK)\subseteq \calK$, we know that $\rvT_\calK^{-1} = \rvT_\calK$ and $\rvT_\calK$ is onto. 
\end{proof}

\begin{remark} \label{rmk:concave}
Given $\phi\in\calK$ that is concave on $\bbS_{++}^d$, note that  $\psi:=\rvT(\phi)$ may not be convex or concave on $\bbS_{++}^d$. For a simple example, consider $d=1$ and $\phi(t):= -e^{-t}$ for $t>0$. As a result, $\psi(t) = e^{-1/t}$ for $t>0$, which is convex on $(0,1/2]$ and concave on $[1/2, +\infty)$. 
\end{remark}

\noindent
Note that Lemma~\ref{lem:T} will not directly appear in our proof of the monotonicity of MA (cf.~Theorem~\ref{thm:monotone}), but it facilitates our exposition below. 
Next, we present a simple formula that expresses 
$\nabla\phi$ in terms of  
$\nabla \psi$, 
where $\psi:=\rvT(\phi)$. 
The proof of this formula is standard, and deferred to Appendix~\ref{app:diff}.  

\begin{lemma} \label{lem:differential}
Given $\phi\in \bbV$,  let $\psi:=\rvT(\phi)\in\bbV$. 
For any $M\succ 0$, we have $\nabla \phi(M) = M^{-1} \nabla\psi(M^{-1}) M^{-1}$,  and hence $$\ipt{\nabla \phi(M)}{M} = \ipt{ \nabla\psi(M^{-1}) }{M^{-1}}.$$
\end{lemma}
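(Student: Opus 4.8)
The plan is to differentiate the defining relation of $\psi$ directly. Since $\rvT^{-1}=\rvT$ (Lemma~\ref{lem:T}), having $\psi=\rvT(\phi)$ is equivalent to $\phi=\rvT(\psi)$, i.e.\ $\phi(M)=-\psi(M^{-1})$ for all $M\succ 0$. The whole lemma is then a chain-rule computation against this identity. First I would fix the inner-product convention: on $\bbS^d$ I use $\ipt{A}{B}=\tr(AB)$, so that for a differentiable $g$ the gradient $\nabla g(M)$ is the unique symmetric matrix with $\frac{d}{dt}\big|_{t=0} g(M+tH)=\ipt{\nabla g(M)}{H}$ for every $H\in\bbS^d$. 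The only analytic ingredient needed is the differential of matrix inversion: for $M\succ 0$ and any symmetric $H$ (with $t$ small enough that $M+tH\succ 0$), $\frac{d}{dt}\big|_{t=0}(M+tH)^{-1}=-M^{-1}HM^{-1}$.

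Next I would carry out the chain rule. Writing $\frac{d}{dt}\big|_{t=0}\phi(M+tH)=-\frac{d}{dt}\big|_{t=0}\psi\big((M+tH)^{-1}\big)$ and substituting the inversion differential gives
\begin{equation}
\frac{d}{dt}\Big|_{t=0}\phi(M+tH)=-\ipt{\nabla\psi(M^{-1})}{-M^{-1}HM^{-1}}=\tr\big(\nabla\psi(M^{-1})M^{-1}HM^{-1}\big).
\end{equation}
Applying the cyclic property of the trace to move the trailing $M^{-1}$ to the front, this equals $\tr\big((M^{-1}\nabla\psi(M^{-1})M^{-1})H\big)=\ipt{M^{-1}\nabla\psi(M^{-1})M^{-1}}{H}$. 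Because this holds for every symmetric $H$, and because $M^{-1}\nabla\psi(M^{-1})M^{-1}$ is symmetric (as $M^{-1}$ and $\nabla\psi(M^{-1})$ both are), I can read off $\nabla\phi(M)=M^{-1}\nabla\psi(M^{-1})M^{-1}$, which is the first claim.

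For the second identity I would simply substitute this formula and use cyclicity once more: $\ipt{\nabla\phi(M)}{M}=\tr\big(M^{-1}\nabla\psi(M^{-1})M^{-1}M\big)=\tr\big(M^{-1}\nabla\psi(M^{-1})\big)=\ipt{\nabla\psi(M^{-1})}{M^{-1}}$. This computation is routine; the only points requiring care are keeping the sign from the inversion differential straight and verifying that the matrix produced is genuinely symmetric, so that it is legitimately the gradient with respect to the trace inner product. There is no real obstacle beyond this bookkeeping, which is why the proof is relegated to the appendix.
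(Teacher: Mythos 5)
Your proof is correct and is essentially the same argument as the paper's: both compute the directional derivative $\frac{d}{dt}\big|_{t=0}\phi(M+tH)$ via the chain rule applied to $\phi(M)=-\psi(M^{-1})$ and read off the gradient from $\ipt{\nabla\psi(M^{-1})}{M^{-1}HM^{-1}}$. The only cosmetic difference is that you quote the standard inversion differential $-M^{-1}HM^{-1}$ directly, whereas the paper rederives it through the spectral decomposition of $M^{-1/2}HM^{-1/2}$.
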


%

Next, let us introduce the {\em matrix Cauchy-Schwartz} (MCS) inequality (see e.g.,~\cite{Trip_99,Lav_08}). 
In the next lemma, we present a version of the MCS inequality that is slightly different from 
the literature. For readers' convenience, we include its proof in Appendix~\ref{app:MCS}.  


\begin{lemma}[MCS inequality] \label{lem:MCS}
Let $A_i\in \bbR^{q\times p}$ and $B_i\in \bbR^{d\times p}$ for $i\in[n]$, such that $\sum_{i=1}^n A_iA_i^\top \succ 0$. Then we have 
\begin{equation}
\textstyle \sum_{i=1}^n B_iB_i^\top \succeq  (\sum_{i=1}^n B_iA_i^\top) (\sum_{i=1}^n A_iA_i^\top)^{-1}(\sum_{i=1}^n A_iB_i^\top), \label{eq:CS_2}
\end{equation}
and the equality holds if and only if $B_i = (\sum_{i=1}^n B_iA_i^\top) (\sum_{i=1}^n A_iA_i^\top)^{-1}A_i$ for all $i\in[n]$.
\end{lemma}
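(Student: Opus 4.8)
The plan is to prove the inequality by a completing-the-square argument that exploits the sum-of-outer-products structure on both sides. To lighten notation, I would write $P := \sum_{i=1}^n A_iA_i^\top \in \bbR^{q\times q}$ and $Q := \sum_{i=1}^n B_iA_i^\top \in \bbR^{d\times q}$, so that $P\succ 0$ by hypothesis, $\sum_{i=1}^n A_iB_i^\top = Q^\top$, and the claimed bound~\eqref{eq:CS_2} becomes the compact statement $\sum_{i=1}^n B_iB_i^\top \succeq QP^{-1}Q^\top$.

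The key step is to introduce, for each $i\in[n]$, the residual matrix $C_i := B_i - QP^{-1}A_i \in \bbR^{d\times p}$, and to observe that $\sum_{i=1}^n C_iC_i^\top \succeq 0$ for the trivial reason that it is a sum of positive semidefinite matrices. I would then expand this sum and substitute the definitions of $P$ and $Q$, using that $P$, and hence $P^{-1}$, is symmetric. The point is that the three quadratic/cross terms all reduce to $QP^{-1}Q^\top$, so the expansion telescopes to
\[
\sum_{i=1}^n C_iC_i^\top = \sum_{i=1}^n B_iB_i^\top - QP^{-1}Q^\top.
\]
Since the left-hand side is positive semidefinite, this identity is exactly the desired inequality~\eqref{eq:CS_2}.

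For the equality characterization, I would note that equality in~\eqref{eq:CS_2} holds precisely when $\sum_{i=1}^n C_iC_i^\top = 0$. As a vanishing sum of positive semidefinite matrices, this forces each summand $C_iC_i^\top = 0$; and since $\tr(C_iC_i^\top) = \norm{C_i}_F^2$, it is equivalent to $C_i = 0$ for every $i$, i.e. $B_i = QP^{-1}A_i = (\sum_{j}B_jA_j^\top)(\sum_j A_jA_j^\top)^{-1}A_i$ for all $i$, as claimed.

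The only care needed is bookkeeping of transposes and dimensions, since $A_i$ and $B_i$ are rectangular and the three indices $q$, $p$, $d$ are distinct; no genuine obstacle arises once the residuals $C_i$ are in hand, as the argument is purely algebraic. I expect the cross-term cancellation in the expansion to be the most error-prone point, but it is routine, and symmetry of $P^{-1}$ is the only structural fact being used.
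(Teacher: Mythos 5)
Your proof is correct and is essentially identical to the one in the paper's Appendix B: both set $C = (\sum_i B_iA_i^\top)(\sum_i A_iA_i^\top)^{-1}$, expand the positive semidefinite sum $\sum_i (B_i - CA_i)(B_i - CA_i)^\top$ to obtain the inequality, and characterize equality by the vanishing of each residual. No differences worth noting.
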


\noindent
From Lemma~\ref{lem:MCS}, we can easily obtain the following corollary. 

\begin{corollary}\label{cor:CS}
Let $V_i\succeq 0$ for $i\in[n]$ and $\alpha_i,\beta_i\ge 0$ for $i\in[n]$,  such that $\sum_{i=1}^n \alpha_i V_i\succ 0$. Then
\begin{equation}
\textstyle \sum_{i=1}^n \beta_i V_i\; \succeq\; \left(\sum_{i=1}^n \sqrt{\alpha_i\beta_i} V_i\right) \left(\sum_{i=1}^n \alpha_i V_i\right)^{-1} \left(\sum_{i=1}^n \sqrt{\alpha_i\beta_i} V_i\right). \label{eq:CS}
\end{equation}
The equality holds if and only if $\sqrt{\beta_i} V_i^{1/2} = \sqrt{\alpha_i}\left(\sum_{i=1}^n \sqrt{\alpha_i\beta_i} V_i\right) \left(\sum_{i=1}^n \alpha_i V_i\right)^{-1} V_i^{1/2}$ for all $i\in[n]$.
\end{corollary}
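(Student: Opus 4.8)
The plan is to obtain Corollary~\ref{cor:CS} as a direct specialization of the MCS inequality in Lemma~\ref{lem:MCS}, by choosing the rectangular matrices $A_i$ and $B_i$ so that the outer products $A_iA_i^\top$ and $B_iB_i^\top$ reproduce the scaled summands $\alpha_i V_i$ and $\beta_i V_i$. Since each $V_i\succeq 0$ is a $d\times d$ symmetric PSD matrix, it admits a (unique) symmetric PSD square root $V_i^{1/2}$, and the natural choice is $A_i := \sqrt{\alpha_i}\,V_i^{1/2}$ and $B_i := \sqrt{\beta_i}\,V_i^{1/2}$, both in $\bbR^{d\times d}$ (so that $q=p=d$ in the notation of Lemma~\ref{lem:MCS}). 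The scalar weights $\sqrt{\alpha_i}$ and $\sqrt{\beta_i}$ are well-defined precisely because $\alpha_i,\beta_i\ge 0$.

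First I would verify the hypothesis of Lemma~\ref{lem:MCS}. Using the symmetry of $V_i^{1/2}$, we have $A_iA_i^\top = \alpha_i V_i^{1/2}(V_i^{1/2})^\top = \alpha_i V_i$, so $\sum_{i=1}^n A_iA_i^\top = \sum_{i=1}^n \alpha_i V_i \succ 0$, exactly the assumed positive definiteness. Identically, $B_iB_i^\top = \beta_i V_i$, whence $\sum_{i=1}^n B_iB_i^\top = \sum_{i=1}^n \beta_i V_i$, which is the left-hand side of~\eqref{eq:CS}.

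Next I would compute the two cross terms. Again invoking the symmetry of each $V_i^{1/2}$, $\sum_{i=1}^n B_iA_i^\top = \sum_{i=1}^n \sqrt{\alpha_i\beta_i}\,V_i^{1/2}V_i^{1/2} = \sum_{i=1}^n \sqrt{\alpha_i\beta_i}\,V_i$, and by the same calculation $\sum_{i=1}^n A_iB_i^\top = \sum_{i=1}^n \sqrt{\alpha_i\beta_i}\,V_i$ (both are symmetric and coincide). Substituting these expressions, together with $\sum_{i=1}^n A_iA_i^\top = \sum_{i=1}^n \alpha_i V_i$, into the right-hand side of~\eqref{eq:CS_2} produces exactly~\eqref{eq:CS}.

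Finally, the equality characterization transfers verbatim: the condition $B_i = (\sum_{j} B_jA_j^\top)(\sum_{j} A_jA_j^\top)^{-1}A_i$ from Lemma~\ref{lem:MCS} becomes $\sqrt{\beta_i}\,V_i^{1/2} = \sqrt{\alpha_i}\,(\sum_{j} \sqrt{\alpha_j\beta_j}\,V_j)(\sum_{j} \alpha_j V_j)^{-1} V_i^{1/2}$ after pulling the scalar $\sqrt{\alpha_i}$ out front, which is precisely the stated condition. I do not anticipate a genuine obstacle here, as the argument is essentially a change of variables; the only points requiring care are the existence and symmetry of the PSD square root $V_i^{1/2}$ (guaranteed by $V_i\succeq 0$), which is what lets the cross terms and the left-hand side collapse neatly into the desired $V_i$ expressions.
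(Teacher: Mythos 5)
Your proposal is correct and is exactly the paper's own proof: the paper likewise obtains Corollary~\ref{cor:CS} by substituting $A_i = \sqrt{\alpha_i}\,V_i^{1/2}$ and $B_i = \sqrt{\beta_i}\,V_i^{1/2}$ into Lemma~\ref{lem:MCS}. Your write-up simply spells out the routine verifications that the paper leaves implicit.
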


\begin{proof}
For $i\in[n]$, set $A_i = \sqrt{\alpha_i} V_i^{1/2}$ and $B_i = \sqrt{\beta_i} V_i^{1/2}$ in Lemma~\ref{lem:MCS}. 
\end{proof}


\noindent
Equipped with Lemma~\ref{lem:differential} and Corollary~\ref{cor:CS}, we are ready to prove the monotonicity of MA (cf.~Algorithm~\ref{algo:MA}). 
For convenience,  let us denote the support of $w^k$ by $\calI_k\subseteq[n]$, 
i.e., 
\begin{equation}
\calI_k: =\{i\in[n]: w^k_i >0\}, \quad \forall\, k\ge 0.  
\end{equation}
%

\begin{theorem}[Monotonicity of MA]\label{thm:monotone}
Consider $\psi\in\bbV$ that  satisfies~\ref{assump:iso} and~\ref{assump:concave}, and let $\phi:=\rvT(\psi)\in\bbV$. 
In Algorithm~\ref{algo:MA}, assume that for some $k\ge 0$, 
$w^k\in\Delta_+^n$ and $\nabla_i f(w^k)>0$ for all $i\in\calI_k$.  Then for any $\lambda\in(0,1]$, we have $w^{k+1}\in\Delta_+^n$ and 
$f(w^{k+1})\ge f(w^{k})$. 

\end{theorem}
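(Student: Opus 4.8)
The plan is to reduce the claim, via the concavity of $\psi$, to a single scalar inequality that follows from the matrix Cauchy--Schwartz inequality (Corollary~\ref{cor:CS}) together with Chebyshev's sum inequality. Throughout write $M_k := M(w^k)$, $g_i := \nabla_i f(w^k)$, and $N := \nabla\psi(M_k^{-1})$, noting $N\succeq 0$ by~\ref{assump:iso}. Since $f(w) = \phi(M(w))$ with $M(w)=\sum_i w_i A_i$ affine in $w$, the chain rule gives $g_i = \ipt{\nabla\phi(M_k)}{A_i}$, and Lemma~\ref{lem:differential} gives $\nabla\phi(M_k) = M_k^{-1}NM_k^{-1}$. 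First I would record two consequences: $g_i = \ipt{N}{M_k^{-1}A_iM_k^{-1}}$ for each $i$, and (using the second identity in Lemma~\ref{lem:differential}) $\ipt{N}{M_k^{-1}} = \ipt{\nabla\phi(M_k)}{M_k} = \sum_{i\in\calI_k} w_i^k g_i$. I would also observe that the update preserves the support, $\calI_{k+1}=\calI_k$ (because $g_i>0$ on $\calI_k$ by hypothesis), so that $M_{k+1} := M(w^{k+1}) = \sum_{i\in\calI_k} w_i^{k+1}A_i$ shares a positive-definite sub-sum with $M_k$, giving $M_{k+1}\succ 0$ and hence $w^{k+1}\in\Delta_n^+$.

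Next comes the reduction. Since $\phi = \rvT(\psi)$, we have $f(w) = -\psi(M(w)^{-1})$ on $\Delta_n^+$, so $f(w^{k+1}) - f(w^k) = \psi(M_k^{-1}) - \psi(M_{k+1}^{-1})$. Applying the concavity inequality~\ref{assump:concave} at $X = M_k^{-1}$, $Y = M_{k+1}^{-1}$ yields
$$\psi(M_{k+1}^{-1}) \le \psi(M_k^{-1}) + \ipt{N}{M_{k+1}^{-1} - M_k^{-1}},$$
so it suffices to prove $\ipt{N}{M_{k+1}^{-1}} \le \ipt{N}{M_k^{-1}}$.

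The heart of the argument is to bound $M_{k+1}^{-1}$ from above in the Loewner order. Writing $Z := \sum_{i\in\calI_k} w_i^k g_i^{\lambda} > 0$, so that $w_i^{k+1} = w_i^k g_i^\lambda / Z$, I would apply Corollary~\ref{cor:CS} over the index set $\calI_k$ with $V_i = A_i$ and the weights $\alpha_i := w_i^{k+1} = w_i^k g_i^\lambda/Z$ and $\beta_i := Z w_i^k g_i^{-\lambda}$. These are chosen precisely so that $\sqrt{\alpha_i\beta_i} = w_i^k$, whence $\sum_{i\in\calI_k} \alpha_i V_i = M_{k+1}$ and $\sum_{i\in\calI_k} \sqrt{\alpha_i\beta_i} V_i = M_k$. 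Corollary~\ref{cor:CS} then reads $Z\sum_{i\in\calI_k} w_i^k g_i^{-\lambda} A_i \succeq M_k M_{k+1}^{-1} M_k$; conjugating by $M_k^{-1}$ gives $M_{k+1}^{-1} \preceq Z\, M_k^{-1}\big(\sum_{i\in\calI_k} w_i^k g_i^{-\lambda}A_i\big)M_k^{-1}$. Pairing with $N\succeq 0$ (which preserves the Loewner order under the trace inner product) and using $\ipt{N}{M_k^{-1}A_iM_k^{-1}} = g_i$ from the first step, I obtain $\ipt{N}{M_{k+1}^{-1}} \le Z\sum_{i\in\calI_k} w_i^k g_i^{1-\lambda}$.

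It then remains to compare this with $\ipt{N}{M_k^{-1}} = \sum_{i\in\calI_k} w_i^k g_i$, i.e.\ to show $\big(\sum_{i\in\calI_k} w_i^k g_i^\lambda\big)\big(\sum_{i\in\calI_k} w_i^k g_i^{1-\lambda}\big) \le \sum_{i\in\calI_k} w_i^k g_i$. Regarding $(w_i^k)_{i\in\calI_k}$ as a probability vector, this is exactly Chebyshev's sum inequality applied to the two comonotone functions $g\mapsto g^\lambda$ and $g\mapsto g^{1-\lambda}$ (equivalently, a two-term weighted AM--GM applied pairwise), with equality automatic when $\lambda=1$. Chaining the inequalities gives $\ipt{N}{M_{k+1}^{-1}} \le \ipt{N}{M_k^{-1}}$ and hence $f(w^{k+1})\ge f(w^k)$. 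I expect the main obstacle to be the bookkeeping in the Cauchy--Schwartz step --- in particular, discovering the weight assignment $\alpha_i = w_i^k g_i^\lambda/Z$, $\beta_i = Z w_i^k g_i^{-\lambda}$ that makes the cross term collapse to $M_k$ and, after pairing with $N$, collapses the right-hand sum to $\sum_{i} w_i^k g_i^{1-\lambda}$; once this is in place, recognizing the residual scalar inequality as Chebyshev's sum inequality is the only further idea, and the hypothesis $g_i>0$ on $\calI_k$ is exactly what keeps $g_i^{-\lambda}$ and $Z$ well defined.
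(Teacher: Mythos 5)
Your proof is correct and follows essentially the same route as the paper's: the same reduction via the concavity and isotonicity of $\psi$ (you use $\nabla\psi(M_k^{-1})\succeq 0$ to push the Loewner bound through the inner product, where the paper applies isotonicity of $\psi$ itself first --- these are equivalent), and the same application of Corollary~\ref{cor:CS} with weights $\alpha_i = w_i^k g_i^{\lambda}/\gamma_k$, $\beta_i = \gamma_k w_i^k g_i^{-\lambda}$ chosen so that the cross term collapses to $M_k$. The only cosmetic difference is the final scalar step, where you invoke Chebyshev's sum inequality for the comonotone sequences $g_i^{\lambda}$ and $g_i^{1-\lambda}$ while the paper applies Jensen's inequality twice to the concave maps $t\mapsto t^{\lambda}$ and $t\mapsto t^{1-\lambda}$; both yield $\bigl(\sum_i w_i^k g_i^{\lambda}\bigr)\bigl(\sum_i w_i^k g_i^{1-\lambda}\bigr)\le \sum_i w_i^k g_i$.
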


\begin{proof}
For convenience, define $M^k:= M(w^k)$ for $k\ge 0$. 
Since $w^k\in\Delta_+^n$ and $\nabla_i f(w^k)>0$ for all $i\in\calI_k$, we know that $\calI_{k+1} = \calI_k$ and  $w^{k+1}\in\Delta_+^n$. Define $$\textstyle\gamma_k:= \sum_{i\in\calI_k}w_i^k\,{\nabla_i f(w^k)^\lambda}>0,$$ so that $w^{k+1} = w^k\circ \nabla f(w^k)^\lambda/\gamma_k$. 
By setting $\alpha_i =  w^k_i\nabla_i f(w^k)/\gamma_k$, $\beta_i = \gamma_k w^k_i/\nabla_i f(w^k)$ and $V_i = A_i$ 
in Corollary~\ref{cor:CS}, we have
\begin{equation}
0\prec M^{k}(M^{k+1})^{-1}M^{k} \preceq \tilM^k, \quad \mbox{where}\;\; \tilM^k:= \gamma_k \textstyle\sum_{i\in\calI_k} (w^k_i/\nabla_i f(w^k)^\lambda) A_i,\label{eq:def_tilM}
\end{equation} 
which amounts to $0\prec (M^{k+1})^{-1} \preceq (M^{k})^{-1}\tilM^k(M^{k})^{-1}$. 
By the isotonicity and concavity of $\psi$ (cf.~\ref{assump:iso} and~\ref{assump:concave}), we have 
\begin{align}
\phi(M^{k+1}) = -\psi((M^{k+1})^{-1}) &\ge -\psi((M^{k})^{-1}\tilM^k(M^{k})^{-1}) \label{eq:monotone_0}\\
&\ge - \psi((M^{k})^{-1}) - \ipt{\nabla \psi((M^{k})^{-1})}{(M^{k})^{-1}\tilM^k(M^{k})^{-1} - (M^{k})^{-1}}\label{eq:monotone_0.5}\\
&= \phi(M^{k}) - (\ipt{\nabla \phi(M^{k})}{\tilM^k} -  \ipt{\nabla \phi(M^{k})}{M^{k}}),  \label{eq:phi_lb} 
\end{align}
where~\eqref{eq:phi_lb} follows 
from Lemma~\ref{lem:differential}. 
Now, by the definition of $\tilM^k$ in~\eqref{eq:def_tilM}, we have 
\begin{align}
\ipt{\nabla \phi(M^{k})}{\tilM^k} &= \gamma_k \textstyle\sum_{i\in\calI_k} (w^k_i/\nabla_i f(w^k)^\lambda) \ipt{\nabla \phi(M^{k})}{A_i}\nn\\
 &= \textstyle\big(\sum_{i\in\calI_k} w^k_i\,\nabla_i f(w^k)^\lambda\big) \big(\sum_{i\in\calI_k} w^k_i\nabla_i f(w^k)^{1-\lambda}\big)\label{eq:monotone_1}\\
 &\le \textstyle\big(\sum_{i\in\calI_k} w^k_i\,\nabla_i f(w^k)\big)^\lambda \big(\sum_{i\in\calI_k} w^k_i\nabla_i f(w^k)\big)^{1-\lambda}\label{eq:monotone_2}\\
 & = \textstyle\sum_{i\in\calI_k} w^k_i\,\ipt{\nabla \phi(M^{k})}{A_i}\label{eq:monotone_3}\\
 & = \ipt{\nabla \phi(M^k)}{M^k}, \label{eq:monotone_4}
\end{align}
where we use $\nabla_i f(w^k) = \ipt{\nabla \phi(M^{k})}{A_i}$ in~\eqref{eq:monotone_1} and~\eqref{eq:monotone_3} and the concavity of the functions $t\mapsto t^\lambda$ and $t\mapsto t^{1-\lambda}$ on $[0,+\infty)$ for $\lambda\in(0,1]$ in~\eqref{eq:monotone_2}. Combining~\eqref{eq:phi_lb} and~\eqref{eq:monotone_4}, we complete the proof. 
\end{proof}

\begin{remark}
Note that  by Remark~\ref{rmk:concave}, $\phi:= \rvT(\psi)$ need not be concave 
when $\psi$ satisfies~\ref{assump:iso} to~\ref{assump:concave}. Therefore, Theorem~\ref{thm:monotone} states that under certain conditions, Algorithm~\ref{algo:MA} is monotonic on~\eqref{eq:OED}  even if~\eqref{eq:OED} is a nonconvex problem. However, note that this does not imply that Algorithm~\ref{algo:MA} can {\em solve}~\eqref{eq:OED} when it is nonconvex. Indeed, (strict) concavity of $\phi$ on $\bbS_{++}^d$ is needed in~\cite[Theorem~3]{Yu_10} to show that  $\{f(w^k)\}_{k\ge 0}$ converges to $f^*$. 
\end{remark}

\begin{remark} \label{rmk:pos_grad}
In Theorem~\ref{thm:monotone}, the condition that $\nabla_i f(w^k)>0$ for all $i\in\calI_k$ is crucial to ensure that $w^{k+1}\in\Delta_+^n$ and the inequality in~\eqref{eq:def_tilM} 
holds. Note that this condition was also used in the proof of~\cite[Theorem~1]{Yu_10}, although it was not explicitly stated in that theorem. In addition, note that if 
\begin{equation}
\nabla \phi(M)\succ 0, \quad \, \forall\, M\succ 0,  \label{eq:PD_grad}
\end{equation}
 then we have $\nabla_i f(w)>0$ for all $w\in\Delta_+^n$ and $i\in[n]$. 
 Using 
 Lemma~\ref{lem:differential}, we know that $\nabla \phi(M)\succ 0$ for all $M\succ 0$ 
 if and only if $\nabla \psi(M)\succ 0$ for all $M\succ 0$, where $\psi:=\rvT(\phi)$.  
 Thus we easily see that $\phi_\rmD$,  $\phi_\rmA$ and $\phi_p$ 
 with $p>0$ all satisfy~\eqref{eq:PD_grad}. 
 That said, note that the  c-criterion, which is given by 
 \begin{equation}
 \phi_c(X) := -c^\top X^{-1} c, \quad  \forall\, X\succ 0,  \quad\mbox{where}\;\; c\ne 0, \label{eq:def_phi_c}
 \end{equation}
 may not be {strictly isotonic} on $\bbS_{++}^d$. Indeed, for this criterion, 
 the condition that $\nabla_i f(w^k)>0$ for all $i\in\calI_k$ may fail for some $w^k\in\Delta_+^n$ --- see Example~\ref{eg:3d} below for details. 
\end{remark}

\subsection{Strict monotonicity of MA}

Our simple and straightforward proof of the strict monotonicity of MA (cf.~Theorem~\ref{thm:monotone}) allows us to easily investigate the strict monotonicity of MA, which is important in proving the convergence of MA (cf.~\cite[Theorem~3]{Yu_10}).
As we can see, there are only three inequalities used in the proof of Theorem~\ref{thm:monotone}, and strict monotonicity  holds if at least one of these inequalities holds strictly. This leads to the following results. 

\begin{prop} \label{prop:lambda_(0,1)}
Consider the setting in Theorem~\ref{thm:monotone}. If 
$w^{k+1}\ne w^{k}$, or equivalently, 
\begin{equation}
\exists \, i,j\in \calI_k \quad \mbox{such that}\quad \nabla_i f(w^k) \ne \nabla_j f(w^k), \label{eq:nonuniform}
\end{equation}
then for any $\lambda\in(0,1)$, we have $f(w^{k+1})> f(w^{k})$. In addition, 
if $\calI_k=[n]$, then~\eqref{eq:nonuniform} holds  if and only if $w^k\not\in\calW^*$, where $\calW^*$ denotes the set of optimal solutions of~\eqref{eq:OED}, i.e., 
\begin{equation}
\calW^*:= \textstyle \argmax_{w\in\Delta_n } f(w). 
\end{equation}
\end{prop}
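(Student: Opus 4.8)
The plan is to isolate which of the three inequalities in the proof of Theorem~\ref{thm:monotone} can be made strict, and to first record the elementary equivalence asserted in the statement. Writing $g_i:=\nabla_i f(w^k)$ for $i\in\calI_k$, the update in Algorithm~\ref{algo:MA} gives $w^{k+1}_i = w^k_i g_i^\lambda/\gamma_k$ for $i\in\calI_k$ (and $w^{k+1}_i=0$ otherwise). Since $w^k_i>0$ on $\calI_k$ and $t\mapsto t^\lambda$ is injective on $(0,\infty)$ for $\lambda>0$, we have $w^{k+1}=w^k$ if and only if $g_i^\lambda=\gamma_k$ for every $i\in\calI_k$, i.e.\ $g_i$ is constant over $\calI_k$. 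The negation of this is precisely~\eqref{eq:nonuniform}, which settles the equivalence ``$w^{k+1}\ne w^k \Leftrightarrow$~\eqref{eq:nonuniform}''.

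For the strict inequality $f(w^{k+1})>f(w^k)$, I would revisit the single inequality~\eqref{eq:monotone_2}, which is the only place where the exponent $\lambda$ enters. It is obtained by applying Jensen's inequality to the concave maps $t\mapsto t^\lambda$ and $t\mapsto t^{1-\lambda}$ with the probability weights $\{w^k_i\}_{i\in\calI_k}$, namely $\sum_{i\in\calI_k} w^k_i g_i^\lambda \le \big(\sum_{i\in\calI_k} w^k_i g_i\big)^\lambda$ and similarly with $1-\lambda$. For $\lambda\in(0,1)$ both exponents lie in $(0,1)$, so both maps are \emph{strictly} concave on $(0,\infty)$; since $g_i>0$ on $\calI_k$ and, by~\eqref{eq:nonuniform}, the $g_i$ are not all equal, strict Jensen makes each of the two positive factors strict, and hence~\eqref{eq:monotone_2} strict. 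Tracing this back through~\eqref{eq:monotone_1}--\eqref{eq:monotone_4} shows $\ipt{\nabla\phi(M^k)}{\tilM^k} < \ipt{\nabla\phi(M^k)}{M^k}$ strictly, so the bracketed term in~\eqref{eq:phi_lb} is strictly negative; combined with the (not necessarily strict) inequalities~\eqref{eq:monotone_0}--\eqref{eq:monotone_0.5}, this yields $f(w^{k+1})=\phi(M^{k+1}) > \phi(M^k)=f(w^k)$, as claimed.

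It remains to prove, when $\calI_k=[n]$, the equivalence~\eqref{eq:nonuniform}$\Leftrightarrow w^k\notin\calW^*$. The forward direction is immediate from the previous paragraph: if~\eqref{eq:nonuniform} holds then $f(w^{k+1})>f(w^k)$ with $w^{k+1}\in\Delta_+^n$ feasible, so $w^k$ cannot be a maximizer, i.e.\ $w^k\notin\calW^*$; equivalently, this is the contrapositive of the ($\lambda$-free) first-order necessary condition that any maximizer in $\ri\Delta_n$ must have constant $\nabla_i f$. The reverse direction is the only genuine obstacle: I must upgrade the interior first-order stationarity ``$\nabla_i f(w^k)\equiv c$'' to \emph{global} optimality on $\Delta_n$. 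Here I would invoke the gradient inequality for a concave objective: when $\phi$ is concave on $\bbS_{++}^d$, the composite $f=\phi\circ M$ is concave on $\Delta_+^n$ (as $M(\cdot)$ is affine), so for every feasible $w$ one has $f(w)\le f(w^k)+\ipt{\nabla f(w^k)}{w-w^k}=f(w^k)+c\sum_i(w_i-w^k_i)=f(w^k)$, whence $w^k\in\calW^*$. This is the step to handle with care: under~\ref{assump:iso}--\ref{assump:concave} alone the map $\phi=\rvT(\psi)$ need not be concave (cf.\ Remark~\ref{rmk:concave}), in which case an interior stationary point can fail to be optimal; thus the reverse implication genuinely rests on the concavity of $\phi$ that holds for the criteria of interest (the D-, A- and $p^{\mathrm{th}}$-mean criteria), and this is the hypothesis I would make explicit when carrying out that half of the argument.
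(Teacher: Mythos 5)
Your proof is correct and follows essentially the same route as the paper's: strict concavity of $t\mapsto t^\lambda$ and $t\mapsto t^{1-\lambda}$ for $\lambda\in(0,1)$ turns~\eqref{eq:monotone_2} into a strict inequality under~\eqref{eq:nonuniform}, and the second claim is settled through the first-order optimality conditions on the simplex. One point where you are more careful than the paper: for the reverse implication ``$w^k\notin\calW^*\Rightarrow$~\eqref{eq:nonuniform}'', the paper's proof asserts that $\barw\in\calW^*$ \emph{if and only if} the stationarity condition~\eqref{eq:opt} holds, but only the necessity half of that equivalence is free; the sufficiency half requires $f=\Phi\circ M$ to be concave on $\Delta_+^n$, which --- as Remark~\ref{rmk:concave} itself points out --- is not guaranteed by~\ref{assump:iso}--\ref{assump:concave} on $\psi$ alone. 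You correctly isolate concavity of $\phi$ as the one genuinely extra hypothesis needed for that direction, so your version makes explicit an assumption that the paper's proof leaves implicit. Your preliminary verification that $w^{k+1}\ne w^k$ is equivalent to~\eqref{eq:nonuniform} (via injectivity of $t\mapsto t^\lambda$ on $(0,\infty)$) is also a worthwhile addition that the paper states without proof.
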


\begin{proof}
Note that the functions $t\mapsto t^\lambda$ and $t\mapsto t^{1-\lambda}$ are strictly concave on $[0,+\infty)$ for $\lambda\in(0,1)$. Thus under~\eqref{eq:nonuniform}, we see that the inequality~\eqref{eq:monotone_2} becomes strict. 
Next, consider 
that 
$\calI_k=[n]$. 
Since $\calW^*\subseteq \Delta_+^n$, on which $f$ is differentiable, by the first-order optimality condition of~\eqref{eq:OED}, we know that $\barw\in \calW^*$ if and only if 
\begin{equation}
\nabla_i f(\barw) = \textstyle\max_{i\in\bar \calI}\, \nabla_i f(\barw), \quad \forall\, i\in\bar\calI, 
\;\;\; \mbox{and}\;\;\;
\nabla_i f(\barw) \le  \textstyle\max_{i\in\bar\calI}\, \nabla_i f(\barw), \quad \forall\, i\in[n]\setminus\bar\calI,\label{eq:opt}
\end{equation}
where $\bar\calI$ denotes the support of $\barw$, i.e., $\bar\calI:= 
  \{i\in[n]: \barw_i >0\}$. 
Since $\calI_k=[n]$, by~\eqref{eq:opt}, we know that $w^k\in\calW^*$ if and only if $\nabla_i f(w^k)= \max_{i\in[n]}\, \nabla_i f(w^k)$ for all $i\in[n]$, which amounts to that~\eqref{eq:nonuniform} fails to hold.  
\end{proof}


\noindent
Proposition~\ref{prop:lambda_(0,1)} states that under the same setting of Theorem~\ref{thm:monotone}, as long as  $\lambda\in(0,1)$, we essentially obtain the strict monotonicity of MA ``for free'' (since $w^{k+1}\ne w^{k}$ is the minimal assumption for strict monotonicity to hold). In addition, we can easily obtain the following corollary.

\begin{corollary}
Consider $\psi\in\bbV$ that  satisfies~\ref{assump:iso},~\ref{assump:concave} and~\eqref{eq:PD_grad}, and let $\phi:=\rvT(\psi)$. 
In Algorithm~\ref{algo:MA}, choose any  $\lambda\in(0,1)$ and $w^0 \in \ri \Delta_n$. 
Then we have 
$f(w^{k+1})> f(w^{k})$ unless $w^k\in \calW^*$. 
\end{corollary}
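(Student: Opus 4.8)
The plan is to reduce the Corollary to a direct application of Proposition~\ref{prop:lambda_(0,1)}, after setting up a short induction that guarantees, at every iteration, the two hypotheses that Proposition needs: namely $w^k\in\Delta_+^n$ with $\nabla_i f(w^k)>0$ for $i\in\calI_k$, and full support $\calI_k=[n]$. The extra assumption~\eqref{eq:PD_grad} is exactly what feeds both invariants.

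First I would dispose of the base case. Since $w^0\in\ri\Delta_n$, every coordinate $w^0_i$ is strictly positive, so $\calI_0=[n]$; combining this with $A_i\succeq 0$ and the standing assumption $\sum_{i=1}^n A_i\succ 0$ shows $M(w^0)\succ 0$ (if $v^\top M(w^0)v=0$ then each $w^0_i\,v^\top A_i v=0$ forces $v^\top A_i v=0$ for all $i$, hence $v^\top(\sum_i A_i)v=0$ and $v=0$). Thus $w^0\in\Delta_+^n$. For the inductive step I would assume $w^k\in\Delta_+^n$ with $\calI_k=[n]$. Because~\eqref{eq:PD_grad} holds, Remark~\ref{rmk:pos_grad} gives $\nabla_i f(w^k)>0$ for all $i\in[n]$, so the hypotheses of Theorem~\ref{thm:monotone} are met and $w^{k+1}\in\Delta_+^n$. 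Moreover, since the update $w^{k+1}_i\propto w^k_i\,\nabla_i f(w^k)^\lambda$ keeps a coordinate positive exactly when $w^k_i>0$ and $\nabla_i f(w^k)>0$, we get $\calI_{k+1}=\calI_k=[n]$. Hence by induction, for every $k\ge 0$ we have $w^k\in\Delta_+^n$, $\calI_k=[n]$, and $\nabla_i f(w^k)>0$ for all $i$.

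Finally I would invoke Proposition~\ref{prop:lambda_(0,1)} with $\lambda\in(0,1)$. Since $\calI_k=[n]$, the proposition tells us that condition~\eqref{eq:nonuniform} is equivalent to $w^k\notin\calW^*$. Therefore, whenever $w^k\notin\calW^*$, the gradient coordinates $\nabla_i f(w^k)$ are not all equal, so~\eqref{eq:nonuniform} holds and the proposition yields $f(w^{k+1})>f(w^k)$, which is precisely the claim.

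The main obstacle here is not a deep difficulty but the bookkeeping of carrying the two invariants ``$w^k\in\Delta_+^n$'' and ``$\calI_k=[n]$'' together through the induction: Proposition~\ref{prop:lambda_(0,1)}'s characterization of~\eqref{eq:nonuniform} via $\calW^*$ relies on full support $\calI_k=[n]$, whereas Theorem~\ref{thm:monotone} and the positivity of the next iterate rely on $\nabla_i f(w^k)>0$, which is supplied precisely by~\eqref{eq:PD_grad} through Remark~\ref{rmk:pos_grad}. Once both invariants are in place, the conclusion is immediate.
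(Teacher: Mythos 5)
Your proposal is correct and follows essentially the same route as the paper: an induction propagating positivity of the iterates (via~\eqref{eq:PD_grad} and Remark~\ref{rmk:pos_grad}) so that Theorem~\ref{thm:monotone} applies at every step, followed by a direct appeal to Proposition~\ref{prop:lambda_(0,1)} with $\calI_k=[n]$ to equate~\eqref{eq:nonuniform} with $w^k\notin\calW^*$. The only difference is that you spell out the base-case verification $M(w^0)\succ 0$, which the paper leaves implicit in the inclusion $\ri\Delta_n\subseteq\Delta_n^+$.
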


\begin{proof}
From Remark~\ref{rmk:pos_grad}, we know that if $\psi$  satisfies~\eqref{eq:PD_grad}, so does   $\phi$. Consequently, 
if $w^k\in \ri\Delta_n$, then $\nabla f(w^k)>0$ and hence $w^{k+1}\in \ri\Delta_n$. Since $w^0 \in \ri \Delta_n$, we have $w^k\in \ri \Delta_n$ for all $k\ge 0$. Therefore, from Proposition~\ref{prop:lambda_(0,1)}, we know that 
if $w^k\not\in \calW^*$ and $\lambda\in(0,1)$, then 
$f(w^{k+1})> f(w^{k})$. 
\end{proof}

\noindent 
How about the case where $\lambda=1$? In this case,~\eqref{eq:monotone_2} holds with equality, and thus we have to consider sufficient conditions that lead to 
strict inequalities in~\eqref{eq:monotone_0} or~\eqref{eq:monotone_0.5}  (or both). 
To that end, we need to impose stronger assumptions on $\psi$ than those in~\ref{assump:iso} and~\ref{assump:concave}.  
Specifically, we require $\psi$ to be {\em strictly isotonic} on $\bbS_{++}^d$, i.e.,
\begin{equation}
0\prec A \preceq B, \;\; A\ne B \quad\Longrightarrow\quad  \psi(A) < \psi(B),  \label{eq:s_iso}
\end{equation}
and {\em strictly concave} on $\bbS_{++}^d$, i.e., 
\begin{equation}
A,B\succ 0, \;\; A\ne B \quad\Longrightarrow\quad  \psi(A) < \psi(B) + \ipt{\nabla \psi(B)}{A - B}.   \label{eq:s_iso}
\end{equation}

\begin{prop} \label{prop:lambda_1}
Consider the setting in Theorem~\ref{thm:monotone}, but with 
$\psi$ being strictly concave and strictly isotonic on $\bbS_{++}^d$. If 
$w^{k+1}\ne w^{k}$, 
then for any $\lambda\in(0,1]$, we have $f(w^{k+1})> f(w^{k})$. 
\end{prop}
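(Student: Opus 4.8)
The plan is to split on the value of $\lambda$. For $\lambda\in(0,1)$ there is nothing new to do: the power functions $t\mapsto t^\lambda$ and $t\mapsto t^{1-\lambda}$ are \emph{strictly} concave on $[0,+\infty)$, so the reasoning of Proposition~\ref{prop:lambda_(0,1)} already delivers $f(w^{k+1})>f(w^k)$ whenever $w^{k+1}\ne w^k$, and neither strict isotonicity nor strict concavity of $\psi$ is needed. The only genuinely new case is $\lambda=1$, where \eqref{eq:monotone_1}--\eqref{eq:monotone_2} collapse to an equality, so the strictness must instead be harvested from \eqref{eq:monotone_0} or \eqref{eq:monotone_0.5}. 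I would argue by contraposition: assuming $f(w^{k+1})=f(w^k)$ (so that, in view of Theorem~\ref{thm:monotone}, every inequality in its proof is an equality), I will show $w^{k+1}=w^k$.

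First I would read off the two equality conditions. Equality in \eqref{eq:monotone_0.5} together with the \emph{strict} concavity of $\psi$ forces the two arguments of $\psi$ to coincide, i.e.\ $(M^k)^{-1}\tilM^k(M^k)^{-1}=(M^k)^{-1}$, which is exactly $\tilM^k=M^k$. Equality in \eqref{eq:monotone_0} together with the \emph{strict} isotonicity of $\psi$ forces $(M^{k+1})^{-1}=(M^k)^{-1}\tilM^k(M^k)^{-1}$, i.e.\ $M^k(M^{k+1})^{-1}M^k=\tilM^k$; but this is precisely the equality case of the matrix Cauchy--Schwartz bound in \eqref{eq:def_tilM}, so the pointwise equality condition of Corollary~\ref{cor:CS} must hold.

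The crux is to turn that pointwise condition into information about $w^{k+1}$. With the choices $\alpha_i=w^k_i\nabla_i f(w^k)/\gamma_k$, $\beta_i=\gamma_k w^k_i/\nabla_i f(w^k)$, $V_i=A_i$ used in the proof of Theorem~\ref{thm:monotone} (now with $\lambda=1$), one computes $\sum_i\sqrt{\alpha_i\beta_i}\,A_i=M^k$, $\sum_i\alpha_i A_i=M^{k+1}$, and $\sqrt{\beta_i/\alpha_i}=\gamma_k/\nabla_i f(w^k)$, so the equality condition of Corollary~\ref{cor:CS} reads
\[
M^k(M^{k+1})^{-1}A_i=\tfrac{\gamma_k}{\nabla_i f(w^k)}\,A_i,\qquad\forall\,i\in\calI_k.
\]
Writing $P:=M^k(M^{k+1})^{-1}$ and taking the combination $\sum_{i\in\calI_k}w^k_i(\,\cdot\,)$ gives $PM^k=\gamma_k\sum_{i\in\calI_k}(w^k_i/\nabla_i f(w^k))A_i=\tilM^k$. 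Feeding in $\tilM^k=M^k$ from the previous step and using $M^k\succ 0$ yields $P=\bI$; the displayed identity then becomes $(1-\gamma_k/\nabla_i f(w^k))A_i=0$, and since $A_i\ne 0$ for every $i$ we conclude $\nabla_i f(w^k)=\gamma_k$ for all $i\in\calI_k$. Hence $w^{k+1}_i=w^k_i\nabla_i f(w^k)/\gamma_k=w^k_i$, i.e.\ $w^{k+1}=w^k$, completing the contrapositive.

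I expect the main obstacle to be exactly this last reduction: the matrix identities $\tilM^k=M^k$ and $M^{k+1}=M^k$ do \emph{not} by themselves give $w^{k+1}=w^k$, because $w\mapsto M(w)=\sum_i w_iA_i$ need not be injective (the $A_i$ may be linearly dependent). This is why I would lean on the \emph{pointwise} equality condition of Corollary~\ref{cor:CS} rather than the matrix equation alone, and it also explains why both strict hypotheses on $\psi$ are required: strict concavity alone only yields $\tilM^k=M^k$, and strict isotonicity alone only yields the MCS equality condition, each of which is individually consistent with $w^{k+1}\ne w^k$; it is their conjunction (via $PM^k=\tilM^k=M^k$) that collapses $P$ to the identity and pins down the gradients.
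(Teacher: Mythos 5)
Your proof is correct and follows essentially the same route as the paper: reduce to $\lambda=1$ via Proposition~\ref{prop:lambda_(0,1)}, extract $\tilM^k=M^k$ from strict concavity and $M^k(M^{k+1})^{-1}M^k=\tilM^k$ from strict isotonicity, and then invoke the pointwise equality condition of Corollary~\ref{cor:CS} together with $A_i\ne 0$ to force $\nabla_i f(w^k)=\gamma_k$ for all $i\in\calI_k$. The paper reaches $P=M^k(M^{k+1})^{-1}=\bI$ slightly more directly (from $M^{k+1}=M^k$) rather than by summing the pointwise identity, but this is a cosmetic difference, and your remark that the matrix identities alone cannot pin down $w^{k+1}$ (since $w\mapsto M(w)$ need not be injective) correctly identifies why the pointwise MCS equality condition is the essential ingredient.
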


\begin{proof}
By Proposition~\ref{prop:lambda_(0,1)}, it suffices to only consider $\lambda=1$. 
Suppose that $f(w^{k+1})= f(w^{k})$, which implies that both~\eqref{eq:monotone_0} and~\eqref{eq:monotone_0.5} hold with equality.  Since $\psi$ is strictly concave and strictly isotonic on $\bbS_{++}^d$, 
we have 
$M^{k}(M^{k+1})^{-1}M^{k} = \tilM^k$ and $\tilM^k = M^k= M^{k+1}$. By Corollary~\ref{cor:CS}, we  know that 
\begin{equation}
\textstyle\sqrt{\beta_i} A_i^{1/2} = \sqrt{\alpha_i}M^{k}(M^{k+1})^{-1} A_i^{1/2} = \sqrt{\alpha_i} A_i^{1/2}, \quad\forall\, i\in\calI_k,  
\end{equation}
where $\alpha_i =  w^k_i\nabla_i f(w^k)/\gamma_k$ and $\beta_i = \gamma_k w^k_i/\nabla_i f(w^k)$. Since $A_i\ne 0$, we have $\alpha_i = \beta_i$ for all $i\in\calI_k$, which implies that  $\nabla_i f(w^k)=\gamma_k$ for all $i\in\calI_k$, and hence $w^{k+1} = w^k$.  
\end{proof}

\noindent
Comparing Proposition~\ref{prop:lambda_1} with Proposition~\ref{prop:lambda_(0,1)}, we see that in terms of obtaining the strict monotonicity of MA, choosing $\lambda\in(0,1)$ is more advantageous than choosing $\lambda=1$, since the former 
requires weaker assumptions on $\psi$. In fact, as will be illustrated in Example~\ref{eg:2d} below, in some cases, choosing $\lambda\in(0,1)$ and $\lambda=1$ can lead to drastically different 
behaviors of MA. 


\vspace{1ex}
\noindent 
Finally, before ending this section, we provide a sufficient condition that ensures $\psi$ to be  strictly concave and strictly isotonic on $\bbS_{++}^d$. To that end, given a univariate function $g:(0,+\infty)\to\bbR$ and $X\in\bbS_{++}^d$ with spectral decomposition $X = \sum_{i=1}^d \lambda_i u_iu_i^\top$, define $g(X):= \sum_{i=1}^d g(\lambda_i) u_iu_i^\top$. We call $g$ {\em matrix monotone} on $\bbS_{++}^d$ if 
\begin{equation}
A\succeq B \succ 0 \quad\Longrightarrow \quad g(A) \succeq g(B). 
\end{equation}
It is well-known that both functions $t\mapsto\ln t$ and $t\mapsto t^p$ for $p\in[0,1]$ are {matrix monotone} on $\bbS_{++}^d$. For more details, see~\cite[Chapter 4]{Hiai_14}. 

\begin{prop}
Consider an injective  and strictly concave function $g:(0,+\infty)\to\bbR$ that is {matrix monotone} on $\bbS_{++}^d$. If $\psi(X) = \tr(g(X))$ for $X\in \bbS_{++}^d$, then $\psi$ is  strictly concave and strictly isotonic on $\bbS_{++}^d$. In particular, $\psi(X) = \ln\det(X) = \tr(\ln(X))$ and $\psi(X) = \tr(X^p)$ for $p\in(0,1)$ are strictly concave and strictly isotonic on $\bbS_{++}^d$.
\end{prop}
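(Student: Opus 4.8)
The plan is to establish the two conclusions separately, letting the three hypotheses on $g$ play complementary roles: matrix monotonicity and injectivity yield strict isotonicity, whereas strict concavity of the scalar $g$ yields strict concavity of $\psi$. Throughout I would use that a matrix monotone function on $(0,+\infty)$ is automatically smooth (by L\"owner's theory), so that $g'$ exists and $\nabla\psi(X)=g'(X)$ in the usual matrix-function sense.

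\emph{Strict isotonicity.} Suppose $0\prec A\preceq B$ with $A\ne B$. Matrix monotonicity gives $g(A)\preceq g(B)$, hence $\psi(A)\le\psi(B)$. To make this strict, I would first promote the scalar injectivity of $g$ to injectivity of the matrix map $X\mapsto g(X)$ on $\bbS_{++}^d$: being matrix monotone, $g$ is scalar nondecreasing, and together with injectivity it is strictly increasing, so it has a continuous inverse and $X\mapsto g(X)$ is inverted by the matrix function of $g^{-1}$. Thus $A\ne B$ forces $g(A)\ne g(B)$, so $g(B)-g(A)$ is a nonzero positive semidefinite matrix and therefore has strictly positive trace; this gives $\psi(A)<\psi(B)$.

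\emph{Strict concavity.} Here I would use a Klein-type trace identity that reduces matters to the scalar first-order inequality for $g$. Writing the spectral decompositions $A=\sum_i\alpha_i u_iu_i^\top$ and $B=\sum_j\beta_j w_jw_j^\top$ and setting $c_{ij}:=(u_i^\top w_j)^2\ge 0$, a direct expansion (using $\sum_j c_{ij}=\sum_i c_{ij}=1$) yields
\[
\psi(A)-\psi(B)-\ipt{\nabla\psi(B)}{A-B}=\sum_{i,j}c_{ij}\big[g(\alpha_i)-g(\beta_j)-g'(\beta_j)(\alpha_i-\beta_j)\big].
\]
Each bracket is $\le 0$ by concavity of $g$ (its graph lies below every tangent line), so the right-hand side is $\le 0$, which is the nonstrict first-order concavity inequality for $\psi$. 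For strictness, I would analyze the equality case: since $(c_{ij})$ is doubly stochastic, each row has an entry $c_{ij}>0$; if the sum vanishes, the corresponding bracket vanishes, and strict concavity of $g$ forces $\alpha_i=\beta_j$ whenever $c_{ij}>0$. Plugging this into the elementary identity $\tr((A-B)^2)=\sum_{i,j}c_{ij}(\alpha_i-\beta_j)^2$ gives $\tr((A-B)^2)=0$, i.e.\ $A=B$. Hence for $A\ne B$ the first-order inequality is strict, which is precisely the stated strict concavity.

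The main obstacle is this equality analysis for strict concavity: setting up the weighted identity with the coefficients $c_{ij}$, correctly identifying $\nabla\psi(B)=g'(B)$, and converting the condition ``$\alpha_i=\beta_j$ on the support of $(c_{ij})$'' into $A=B$ via the trace-square identity. Finally, for the concrete criteria I would simply verify the hypotheses: $g(t)=\ln t$ and $g(t)=t^p$ with $p\in(0,1)$ are each strictly increasing (hence injective), strictly concave, and matrix monotone on $\bbS_{++}^d$ (the last being the fact recalled above), while $\tr(\ln X)=\ln\det X$; so both are covered by the general statement.
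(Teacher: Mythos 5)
Your proof is correct, and the two halves compare differently against the paper. The strict isotonicity half is essentially the paper's own argument: matrix monotonicity plus injectivity make $g(B)-g(A)$ a nonzero positive semidefinite matrix, whose trace is therefore positive. You actually add a detail the paper glosses over, namely why scalar injectivity of $g$ upgrades to injectivity of the matrix map $X\mapsto g(X)$ (via composing with $g^{-1}$ as a matrix function). The strict concavity half is where you genuinely diverge: the paper observes that $\psi(X)=\sum_{i}g(\lambda_i(X))$ is a spectral function and simply cites an external theorem (Theorem 4.5 of Bauschke et al.) to transfer strict concavity from $g$ to $\psi$, whereas you prove it from scratch with the Klein-type identity $\psi(A)-\psi(B)-\ipt{\nabla\psi(B)}{A-B}=\sum_{i,j}c_{ij}\bigl[g(\alpha_i)-g(\beta_j)-g'(\beta_j)(\alpha_i-\beta_j)\bigr]$, $c_{ij}=(u_i^\top w_j)^2$ doubly stochastic, and close the equality case via $\tr((A-B)^2)=\sum_{i,j}c_{ij}(\alpha_i-\beta_j)^2$. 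Both identities check out and the equality analysis is sound: vanishing of the sum forces each bracket with $c_{ij}>0$ to vanish, strict concavity of $g$ then gives $\alpha_i=\beta_j$ on the support of $(c_{ij})$, and the trace-square identity yields $A=B$. The paper's route buys brevity; yours buys a self-contained elementary argument that also makes explicit where differentiability of $g$ enters (you correctly note it is supplied by L\"owner's theory for matrix monotone functions, a point the paper leaves implicit since it never assumes $g$ differentiable). The verification of the concrete examples at the end matches the paper.
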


\begin{proof}
Note that $\tr(\cdot)$ is strictly isotonic on $\bbS^d$, namely, if $A\succeq B$ but $A\ne B$, then $\tr(A-B)  
>0$, and hence $\tr(A)> \tr(B)$.   
Since $g$ is injective and {matrix monotone} on $\bbS_{++}^d$, for any $A\succeq B \succ 0$ and $A\ne B$, we have $g(A)\succeq g(B)$ and $g(A)\ne g(B)$. As a result, we have $$\psi(A) = \tr(g(A))>\tr(g(B))=\psi(B).$$ 
In addition, since $\psi(X) = \sum_{i=1}^d g(\lambda_i(X))$, by~\cite[Theorem 4.5]{Bauschke_01}, the strict concavity of $\psi$ on $\bbS_{++}^d$ follows from the strict concavity of $g$ on $(0,+\infty)$. 
\end{proof}




\subsection{Illustrating Examples}

\begin{example}\label{eg:2d}
Let $n=d=2$, $A_i = e_i^\top e_i$ for $i\in[n]$, and the optimality criterion $\phi= \phi_\rmA$ (cf.~Section~\ref{sec:intro}).  
Here $e_i$ denotes the $i$-th standard coordinate vector, 
and $e$ denotes the vector with all entries equal to one. Consequently,~\eqref{eq:OED} becomes 
\begin{equation}
\textstyle f^*:= \sup\; \big\{f(w):=-(w_1^{-1} + w_2^{-1}) - \iota_{>0}(w_1) -\iota_{>0}(w_2)\big\} \quad \st\quad w_1+w_2 = 1,  
\label{eq:2d}
\end{equation}
where $\iota_{>0}$ denotes the indicator function of $(0,+\infty)$, namely, $\iota_{>0}(t) = 0$ if $t> 0$ and $+\infty$ if $t\le 0$.
Clearly,  
for~\eqref{eq:2d}, the feasible region $\Delta_n^+ = \ri\Delta_n$ and the unique optimal solution is $w^* = (1/2,1/2)^\top$ with $f^* = -4$.  Note that Algorithm~\ref{algo:MA} can be written as the following fixed-point iteration: 
\begin{equation}
\forall\,k\ge 0:\quad  w^{k+1}:= \rvF_\lambda(w^k),\quad 
\mbox{where}\quad  \rvF_\lambda(w) := \frac{w\circ \nabla f(w)^\lambda}{\ipt{w}{\nabla f(w)^\lambda}}, \quad \forall\,w\in\ri \Delta_n. 
\end{equation}
Observe that for~\eqref{eq:2d}, we have $\nabla f(w)= (w_1^{-2},w_2^{-2})^\top >0$ for $w\in\ri \Delta_n$, and hence 
$\rvF_\lambda$ has the following simple form:  
\begin{equation}
\rvF_\lambda(w) := \bigg(\frac{w_1^{1-2\lambda}}{w_1^{1-2\lambda} + w_2^{1-2\lambda}},\;\frac{w_2^{1-2\lambda}}{w_1^{1-2\lambda} + w_2^{1-2\lambda}}\bigg)\in\ri\Delta_n,\quad \forall\,w\in\ri\Delta_n.  \label{eq:def_F_lam}
\end{equation}
Let us make several interesting observations about $\rvF_\lambda$: 

\begin{enumerate}[label=(O\arabic*),leftmargin=6ex]
\item For any $w\in\ri \Delta_n$, we have $\rvF_1(w) = (w_2,w_1)$.  Hence if $w^0\ne w^*$ and $\lambda=1$, then Algorithm~\ref{algo:MA} will generate $\{w^k\}_{k\ge 0}$ that cycle between $(w^0_1,w^0_2)$ and $(w^0_2,w^0_1)$, and fail to converge. 
\item For any $w\in\ri \Delta_n$, we have $\rvF_{1/2}(w) = (1/2,1/2)$. Hence if $\lambda=1/2$, then for any $w^0\in \ri \Delta_n$, Algorithm~\ref{algo:MA} will reach $w^*$ in at most one step. 
\item \label{obs:1/2} For any $w\in\ri \Delta_n$ and $\varepsilon\in(0,1/2)$, we have 
\begin{equation}
\rvF_{\frac{1}{2}-\varepsilon}(w) := \bigg(\frac{w_1^{2\varepsilon}}{w_1^{2\varepsilon} + w_2^{2\varepsilon}},\;\frac{w_2^{2\varepsilon}}{w_1^{2\varepsilon} + w_2^{2\varepsilon}}\bigg) \quad\mbox{and}\quad \rvF_{\frac{1}{2}+\varepsilon}(w) := \bigg(\frac{w_2^{2\varepsilon}}{w_1^{2\varepsilon} + w_2^{2\varepsilon}},\;\frac{w_1^{2\varepsilon}}{w_1^{2\varepsilon} + w_2^{2\varepsilon}}\bigg). 
\end{equation}
Therefore, let $\{w^k\}_{k\ge 0}$ and $\{\tilw^k\}_{k\ge 0}$ be the iterates produced by Algorithm~\ref{algo:MA} with $\lambda = 1/2-\varepsilon$ and $\lambda = 1/2+\varepsilon$, respectively (and with the same starting point). Then $w^k = \tilw^k$ for even $k$ and $w^k = (\tilw^k_2,\tilw^k_1)$ for odd $k$. As a result, we have $f(w^k) = f(\tilw^k)$ for all $k\ge 0$. 
\end{enumerate}

\noindent 
Lastly, note that for~\eqref{eq:2d}, Algorithm~\ref{algo:MA} achieves global linear convergence in terms of the objective gap, and the linear rate is given by $\abst{1-2\lambda}$. 

\begin{prop}\label{prop:linear}
In~\eqref{eq:2d}, for any $w\in\ri \Delta_n$ and $\lambda\in(0,1)$, define $w^+:= F_{\lambda}(w)$. Then we have 
\begin{equation}
f^* - f(w^+)\le \abst{1-2\lambda} ( f^*-f(w) ). 
\end{equation}
\end{prop}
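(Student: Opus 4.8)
The plan is to reduce everything to a single scalar variable and then to one clean transcendental inequality. Since $w_1 + w_2 = 1$, write $w = (t, 1-t)$ with $t \in (0,1)$. A direct computation gives $f(w) = -1/(t(1-t))$, and since $t(1-t) \le 1/4$ with equality at $t = 1/2$, one recovers $f^* = -4$ together with
\[
f^* - f(w) = \frac{1}{t(1-t)} - 4 = \frac{(1-2t)^2}{t(1-t)}.
\]
Set $\mu := 1 - 2\lambda$, so that by~\eqref{eq:def_F_lam} the point $w^+ = F_\lambda(w)$ has first coordinate $s = t^\mu/(t^\mu + (1-t)^\mu)$. Applying the same error formula to $w^+$ and simplifying $1-2s$ and $s(1-s)$ gives
\[
f^* - f(w^+) = \frac{(1-2s)^2}{s(1-s)} = \frac{\bigl((1-t)^\mu - t^\mu\bigr)^2}{\bigl(t(1-t)\bigr)^\mu}.
\]
Hence the claim is equivalent to the purely algebraic inequality
\[
\frac{\bigl((1-t)^\mu - t^\mu\bigr)^2}{\bigl(t(1-t)\bigr)^\mu} \;\le\; \abst{\mu}\,\frac{\bigl((1-t) - t\bigr)^2}{t(1-t)}, \qquad \forall\, t \in (0,1).
\]

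Next I would exploit scale invariance. Viewing each side as a function of $(a,b) := (t, 1-t)$, both are invariant under $(a,b) \mapsto (ca, cb)$ and symmetric under $a \leftrightarrow b$, so everything depends only on the ratio $r := (1-t)/t > 0$. Dividing through, the inequality becomes
\[
\frac{(r^\mu - 1)^2}{r^\mu} \;\le\; \abst{\mu}\,\frac{(r-1)^2}{r}, \qquad \forall\, r > 0.
\]
Writing $u := \ln r$ and using $r^{\mu/2} - r^{-\mu/2} = 2\sinh(\mu u/2)$ and $r^{1/2} - r^{-1/2} = 2\sinh(u/2)$, the left side equals $4\sinh^2(\mu u/2)$ and the right side equals $4\abst{\mu}\sinh^2(u/2)$. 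Since $\sinh^2$ is even, the whole statement reduces to
\[
\sinh^2(\nu v) \le \nu\,\sinh^2(v), \qquad \forall\, v \ge 0, \qquad \nu := \abst{\mu} = \abst{1-2\lambda} \in [0,1),
\]
equivalently $\sinh(\nu v) \le \sqrt{\nu}\,\sinh(v)$ for all $v \ge 0$.

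This last inequality is the crux, and I would prove it by comparing Taylor coefficients. Expanding $\sinh(\nu v) = \sum_{k \ge 0} (\nu v)^{2k+1}/(2k+1)!$ and $\sqrt{\nu}\,\sinh(v) = \sum_{k \ge 0} \nu^{1/2} v^{2k+1}/(2k+1)!$, it suffices to verify the termwise bound $\nu^{2k+1} \le \nu^{1/2}$ for every $k \ge 0$. This holds because $\nu \in [0,1)$ and $2k+1 \ge 1 > 1/2$, so $p \mapsto \nu^{p}$ is nonincreasing; summing the termwise inequalities (all terms being nonnegative for $v \ge 0$) yields the result. The one place where $\lambda \in (0,1)$ rather than $\lambda = 1$ is genuinely used is exactly here, through $\nu = \abst{1-2\lambda} < 1$: at $\lambda = 1$ one gets $\nu = 1$ and the bound degenerates to an equality, consistent with the non-convergent cycling for $\lambda = 1$ noted earlier in this example. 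I expect the only real care to be in bookkeeping the two reductions correctly (degree-zero homogeneity in $(a,b)$, and the evenness of $\sinh^2$); the analytic heart, the coefficient comparison, is short and elementary.
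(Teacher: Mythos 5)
Your proof is correct. After the reductions, your core inequality $\sinh^2(\nu v)\le \nu\,\sinh^2(v)$ is in fact the same scalar inequality the paper needs: the paper's key step is the tangent-line bound $(w_1/w_2)^\gamma+(w_2/w_1)^\gamma-2\le \gamma\,(w_1/w_2+w_2/w_1-2)$, which under $r=w_1/w_2=e^{u}$ and $\cosh x-1=2\sinh^2(x/2)$ is exactly your statement with $\nu=\gamma$. What differs is the packaging on both ends. On the front end, the paper splits into $\lambda\in(0,1/2]$ (where $\gamma=1-2\lambda\ge 0$ and concavity of $t\mapsto t^\gamma$ applies directly) and $\lambda\in(1/2,1)$ (handled by the symmetry $F_\lambda\leftrightarrow F_{1-\lambda}$ from observation (O3)); your logarithmic substitution and the evenness of $\sinh^2$ absorb the sign of $\mu=1-2\lambda$ and treat both ranges uniformly, which is cleaner. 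On the back end, the paper proves the scalar inequality in one line from concavity of $t\mapsto t^\gamma$, whereas you prove it by termwise comparison of the power series of $\sinh(\nu v)$ and $\sqrt{\nu}\,\sinh(v)$; your route is slightly longer but makes transparent exactly where $\nu=\abst{1-2\lambda}<1$ is used and why the bound degenerates to equality at $\lambda=1$, consistent with the cycling behavior. Both arguments are complete and elementary; no gaps.
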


\begin{proof}
See Appendix~\ref{app:proof_linear}. 
\end{proof}

\end{example}

\begin{example} \label{eg:3d}
Let $n=d=3$, $A_i = e_i^\top e_i$ for $i\in[n]$ 
and $\phi = \phi_c$ 
for $c := (1,1,0)^\top$ (cf.~\eqref{eq:def_phi_c}). In this case,~\eqref{eq:OED} 
becomes 
\begin{equation}
\textstyle f^*:= \sup\; \big\{f(w):=-(w_1^{-1} + w_2^{-1})- \iota_{>0}(w_1) -\iota_{>0}(w_2)-\iota_{>0}(w_3)\big\} \quad \st\quad w_1+w_2+w_3 = 1. 
\label{eq:3d}
\end{equation}
Note that $f^* = -4$ but~\eqref{eq:3d} does not have any optimal solution. Also, the feasible region $\Delta_n^+ = \ri\Delta_n$.  In addition, note that for any $w\in \ri\Delta_n$, we have $\nabla f(w)= (w_1^{-2},w_2^{-2},0)^\top $, and hence for all $\lambda\in(0,1]$, the next iterate $w^+:= \rvF_\lambda(w) \not\in \ri\Delta_n$. In fact, we have $f(w^+)=-\infty$ and $f$ is not differentiable 
at $w^+$.   This implies that in Algorithm~\ref{algo:MA}, for any starting point $w^0\in \ri\Delta_n$, we have $w^1\not\in \ri\Delta_n$ and $f$ is not differentiable 
at $w^1$.  
This prevents Algorithm~\ref{algo:MA} from proceeding further.
\end{example}

\noindent
 Examples~\ref{eg:2d} and~\ref{eg:3d} may look similar on the surface. 
 However, note that Algorithm~\ref{algo:MA} exhibits  
vastly different behaviors on these two examples. 
This can be partly attributed to the fact that $\nabla f(w)>0$ for all $w\in \ri\Delta_n$ in  Example~\ref{eg:2d}, 
but it is not the case in Example~\ref{eg:3d}. The positivity of $\nabla f$ on $\ri\Delta_n$ ensures that if $w^0\in\ri\Delta_n$, then $w^k\in\ri\Delta_n$ for all $k\ge 0$, 
which is precisely required in the monotone convergence theory of MA in~\cite[Theorem~2]{Yu_10}. 
To certain extent, Example~\ref{eg:3d} reveals some limitations of MA on OED problems where the condition $\nabla f(w)>0$ for all $w\in \ri\Delta_n$ fails. This issue will be discussed in more details in the next section. 


\section{Discussions and Open Problems} \label{sec:discuss}

Note that the formulation in~\eqref{eq:OED} restricts the feasible moment matrix $M(w)$ to be positive definite, which makes sense if we wish to estimate the full parameter $\theta$ (cf.~Section~\ref{sec:intro}). 
However, as illustrated in Pukelsheim~\cite{Pukel_06}, if one is interested in estimating a linear parameter subsystem $K^\top \theta$, where $K\in \bbR^{d\times s}$ has full column rank $s$, then we can relax the requirement $M(w)\succ 0$ to $M(w)\in\calF(K)$, where $\calF(K)$ is called the {\em feasibility cone} (induced by $K$) and given by
\begin{equation}
\calF(K):= \{M\in \bbS_+^d: \calR(K)\subseteq \calR(M) \}. 
\end{equation}
Here $\calR(B)$ denotes the range (or column space) of a matrix $B$. Note that $\bbS_{++}^d\subseteq \calF(K)\subseteq \bbS_+^d$, and $\calF(K)$ is a convex cone in the following sense: 
\begin{equation}
\beta M + \beta' M' \in \calF(K), \quad \forall\, M, M'\in \calF(K), \;\; \beta,\beta' > 0. 
\end{equation}
In fact, for any $M\in \calF(K)$, we can define the following {\em information matrix}:
\begin{equation}
C_K(M): = (K^\top M^\dagger K)^{-1}, 
\end{equation}
where $ M^\dagger$ denotes the pseudo-inverse of $M$. Let us note the following two extreme cases: 
\begin{enumerate}[label = (C\arabic*),leftmargin = 6ex]
\item \label{case:K_Id} If $K =I_d$, then $\calF(K)  = \bbS_{++}^d$, and $C_K(M) = M$ for any $M\in \calF(K) $. 
\item If $K= c\in\bbR^d\setminus\{0\}$, then $\calF(K)  = \{M\in \bbS_+^d: c\in  \calR(M) \}$, and $C_K(M) = (c^\top M^\dagger c)^{-1}$ for any $M\in \calF(K) $. 
\end{enumerate}
Based on $C_K(M)$ and an optimality criterion $\phi:\bbS^s_{++}\to \bbR$, we can define $\Gamma:\bbS^n\to \underline \bbR$ 
such that 

\begin{equation}
   \Gamma(M)  := \bigg\{ 
   \begin{aligned}
   \;\phi(C_K(M)), &\quad   M\in \calF(K)  \\
   -\infty, &\quad    \mbox{otherwise}
\end{aligned}\;\;, 
\end{equation}
and we solve a ``generalized'' formulation of~\eqref{eq:OED} in the following:
 \begin{equation}
\textstyle \sup_{w\in\Delta_n }\; \Gamma(M(w)).  \tag{${\rm OED_1}$} \label{eq:OED1} 
\end{equation}
Let us make a few quick remarks about~\eqref{eq:OED1}. First, from~\ref{case:K_Id}, we know that~\eqref{eq:OED1} reduces to~\eqref{eq:OED} when $K =I_d$. Second, as mentioned in Lu and Pong~\cite[Remark~3.1(b)]{Lu_13}, under certain conditions on $\phi$ (which is satisfied by $\phi_\rmD$ and $\phi_p$ for $p>0$),~\eqref{eq:OED1} has an optimal solution. Third, according to Pukelsheim~\cite[Sections~7.6~to~7.8]{Pukel_06}, if $\phi$ is isotonic, concave and sub-differentiable on $\bbS_{++}^d$, then $\Gamma$ is concave and sub-differentiable on $\calF(K)$. Note that we call $\Gamma$ 
sub-differentiable on $\calF(K)$ if for all $M\in\calF(K)$, we have $\partial \Gamma(M)\ne \emptyset$, where    
\begin{equation}
\partial \Gamma(M):=\{G\in\bbS^d: \Gamma(M') \le \Gamma(M) + \ipt{G}{M'-M}, \quad \forall\, M'\in\calF(K)\} . 
\end{equation}

Let us turn our focus back to Example~\ref{eg:3d}, where $n=d=3$ and $A_i = e_i^\top e_i$ for $i\in[n]$. 
In~\eqref{eq:OED1}, if we let 
 $K= c=(1,1,0)$, then we have $\calF(K) = \{w\in\bbR^3: w_1,w_2>0, w_3\ge 0\}$ and $C_K(M) = (w_1^{-1} + w_2^{-1})^{-1}$. Furthermore, if we let $\phi(t) = -1/t$ for $t>0$, then~\eqref{eq:OED1} becomes  
\begin{equation}
\textstyle F^*=\sup\; \big\{F(w):=-(w_1^{-1} + w_2^{-1})- \iota_{>0}(w_1) -\iota_{>0}(w_2)-\iota_{\ge 0}(w_3)\big\} \quad \st\quad w_1+w_2+w_3 = 1, 
\label{eq:3d2}
\end{equation}
where $\iota_{\ge 0}$ denotes the indicator function of $[0,+\infty)$. 
Compared to~\eqref{eq:3d}, the only difference in~\eqref{eq:3d2} is that in the objective function, we have $\iota_{\ge 0}(w_3)$ instead of $\iota_{> 0}(w_3)$. Note that this difference yields at least two important consequences. First, note that $-F$ is proper, convex and {\em closed}, 
and hence~\eqref{eq:3d2} has an optimal solution. In fact, in this case, the optimal solution is unique and given by $w^* = (1/2,1/2,0)$. Second, for any $w\in \calF(K)\setminus \bbS_{++}^d$, 
we have 
$\partial F(w)= \{(w_1^{-2},w_2^{-2},t): t\ge 0\}$. 
Now, if we apply Algorithm~\ref{algo:MA} to solve~\eqref{eq:3d2} with any starting point $w^0\in \ri\Delta_n$,  as observed in Example~\ref{eg:3d}, we will have $w^1_3=0$, and $\nabla F(w^1)$ is undefined. However, if we modify Algorithm~\ref{algo:MA} such that  $\nabla f(w^{k})$ is replaced by any $g^k\in \partial F(w^k)$ in Step~\ref{step:multiply}, then the modified algorithm can still proceed from 
$w^1 = (w^1_1,w^1_2,0)$, and generate the iterates $\{(w^k_1,w^k_2,0)\}_{k\ge 1}$ in the following way:
\begin{equation}
\forall\,k\ge 1:\quad  (w_1^{k+1},w_2^{k+1}):= \rvF_\lambda((w_1^{k},w_2^{k})), 
\end{equation}
where $\rvF_\lambda$ is defined in~\eqref{eq:def_F_lam}. As a result, if $\lambda\in(0,1)$, then we know that $\{(w^k_1,w^k_2,0)\}_{k\ge 1}$ converges to  $w^*$ linearly in terms of the objective value, i.e., 
\begin{equation}
F^* - F(w^{k+1})\le \abst{1-2\lambda} ( F^*-F(w^k) ), \quad  \forall\, k\ge 1. 
\end{equation}

In fact, we may interpret the modification above in a different, yet more intuitive way. 
Once $w^1_3= 0$, by the multiplicative nature of Algorithm~\ref{algo:MA}, we will have $w_3^k= 0$ for all $k\ge 1$. As such,  we can reduce 
the problem in~\eqref{eq:3d2} to that in~\eqref{eq:2d} by dropping the third coordinate, and apply Algorithm~\ref{algo:MA} to the reduced problem~\eqref{eq:2d}. 
For the particular case of~\eqref{eq:3d2}, 
from the discussions 
above, we know that this approach works, namely, starting from any $w^0\in \ri\Delta_n$, 
the generated sequence $\{w^k\}_{k\ge 0}$ convergences to $w^*$ linearly (in terms of the objective value). 
However, it remains an open question whether such a ``coordinate dropping'' approach works in general. Specifically, starting from $w^0\in \ri\Delta_n$, if for some $k\ge 0$ and $i\in[n]$, we have $w^k_i>0$ but $w^{k+1}_i=0$, then the following three questions naturally arise:
\begin{itemize}
\item (Feasibility) Is it true that $M(w^{k+1})\in \calF(K)$?
\item (Monotonicity) Do we have $f(w^{k+1})\ge  f(w^k)$? (Note that the monotonicity result in Theorem~\ref{thm:monotone} requires $w^k$ and $w^{k+1}$ to have the same support.) 
\item (Convergence to optimum) Is there 
an optimal solution $w^*$ of~\eqref{eq:OED1} such that $w^*_i=0$?
\end{itemize}

\noindent
If the answers to all of the three questions above are affirmative, then we may have a more general monotone convergence theory than~\cite[Theorem~2]{Yu_10}, which indeed requires $w^k\in \ri\Delta_n$ for all $k\ge 0$. We believe that all of these questions are worth further investigations, and will lead to deeper understanding of the behaviors of MA when applied to solving~\eqref{eq:OED1}.



\appendix

\section{Proof of Lemma~\ref{lem:differential}} \label{app:diff}

For any $H\in\bbS^d$, define $g(t): = \phi(M+tH)$ with $\dom g:= \{t\in\bbR:M+tH\succ 0\}$. 
By the definition of $\psi$ in~\eqref{eq:def_psi}, we know that $\phi(M) = -\psi(M^{-1})$ for $M\succ 0$, and therefore, 
\begin{align*}
g(t) = -\psi\big((M+tH)^{-1}\big) &= -\psi\big(M^{-1/2}(I+t\tilH)^{-1}M^{-1/2}\big), \quad \mbox{where}\;\; \tilH := M^{-1/2} H M^{-1/2}.
\end{align*}
Now, write the spectral decomposition of $\tilH$ as $\tilH = \sum_{i=1}^d \lambda_i u_iu_i^\top$, and we have 
\begin{align*}
g(t) = -\psi\big(M(t)\big), \quad \mbox{where}\;\; M(t):= (M+tH)^{-1} = \textstyle\sum_{i=1}^m (1+t\lambda_i)^{-1} M^{-1/2}u_iu_i^\top M^{-1/2}.
\end{align*}
Since $g'(t) = -\ipt{\nabla \psi\big(M(t)\big)}{ M'(t)} $ and $M'(t)= M^{-1/2} \textstyle(\sum_{i=1}^m -(1+t\lambda_i)^{-2} \lambda_i u_iu_i^\top) M^{-1/2}$, we have
\begin{align*}
 g'(0) = -\ipt{\nabla \psi\big(M(0)\big)}{ M'(0)} = \ipt{\nabla \psi\big(M^{-1}\big)}{M^{-1/2} \tilH M^{-1/2} } =  \ipt{\nabla \psi\big(M^{-1}\big)}{M^{-1} H M^{-1} }. 
\end{align*} 
Since we also have $g'(0) = \ipt{\nabla \phi(M)}{H}$,  the proof is complete. 

\section{Proof of Lemma~\ref{lem:MCS}} \label{app:MCS}

Let $C :=  (\sum_{i=1}^n B_iA_i^\top) (\sum_{i=1}^n A_iA_i^\top)^{-1}$, and we have 
\begin{align}
 0&\preceq \textstyle\sum_{i=1}^n \textstyle (B_i - CA_i) (B_i - CA_i)^\top \label{eq:prec}\\
& = \textstyle\sum_{i=1}^n B_iB_i^\top + C(\sum_{i=1}^n A_iA_i^\top) C^\top - C (\sum_{i=1}^n A_iB_i^\top) - (\sum_{i=1}^n B_iA_i^\top) C^\top\\
&= \textstyle\sum_{i=1}^n B_iB_i^\top  - (\sum_{i=1}^n B_iA_i^\top) (\sum_{i=1}^n A_iA_i^\top)^{-1} (\sum_{i=1}^n A_iB_i^\top). 
\end{align}
In addition, note that~\eqref{eq:prec} holds with equality if and only if $B_i = CA_i$ for all $i\in[n]$. 

\section{Proof of Proposition~\ref{prop:linear} } \label{app:proof_linear}

Let us first consider $\lambda\in(0,1/2]$ and define $\gamma := 1-2\lambda\in[0,1)$. Then
\begin{align*}
f^* - f(w^+) &= 2 + (w_1/w_2)^\gamma + (w_2/w_1)^\gamma - 4\\
&\le 2 + 1 + \gamma (w_1/w_2 -1) + 1 + \gamma (w_2/w_1 -1) - 4 \numberthis \label{eq:gamma_concave}\\
&= \gamma (w_1^2+w_2^2 - 2w_1w_2)/(w_1w_2)\\
&= \gamma ((w_1+w_2)^2/(w_1w_2) - 4)\\
&= \gamma (w_1^{-1} + w_2^{-1}- 4)\numberthis \label{eq:w_1_w_2} \\
&= (1-2\lambda) (f^* - f(w)), 
\end{align*}
where we use the concavity of $t\mapsto t^\gamma$ for $\gamma \in[0,1)$ in~\eqref{eq:gamma_concave}  and that $w_1+w_2  = 1$ in~\eqref{eq:w_1_w_2}. Next, note that if $\lambda\in(1/2,1)$, we know from~\ref{obs:1/2} above  that $f(w^+) = f(\tilw^+)$, where $\tilw^+ := F_{\lambda'}(w)$ and $\lambda':=1-\lambda\in(0,1/2)$. As a result, we have
\begin{align*}
f^*-f(w^+)  = f^*-f(\tilw^+)  \le (1-2\lambda')  ( f^*-f(w)) = (2\lambda-1)(f^*-f(w) ). \tag*{$\square$}
\end{align*}


\bibliographystyle{abbrv}
\bibliography{math_opt,mach_learn,stat_ref}

\end{document}